\documentclass[11pt]{article}
\usepackage{amsmath,amssymb,amsthm}
\usepackage{booktabs}
\usepackage{xcolor}
\usepackage{tikz}
\usepackage[margin=1in]{geometry}
\allowdisplaybreaks[4]

\usepackage{authblk}
\usepackage[hidelinks]{hyperref}

\numberwithin{equation}{section}

\newtheorem{theorem}{Theorem}[section]
\newtheorem{lemma}[theorem]{Lemma}
\newtheorem{proposition}[theorem]{Proposition}
\newtheorem{corollary}[theorem]{Corollary}
\newtheorem{definition}[theorem]{Definition}
\newtheorem{remark}[theorem]{Remark}

\newcommand{\ser}{\mathbin{\oplus_S}}
\newcommand{\paral}{\mathbin{\oplus_P}}

\begin{document}
\title{The multiplicative Merino--Welsh inequality for chordal graphs}

\author[1,2]{Shaohan Xu\thanks{Corresponding author. Email: xush0928@163.com.}}

\author[3]{Fengming Dong\thanks{Email: fengming.dong@nie.edu.sg and donggraph@163.com.}}

\author[1,2]{Kexiang Xu\thanks{Email: kexxu1221@126.com.}}

\affil[1]{\footnotesize School of Mathematics, Nanjing University of Aeronautics and Astronautics, Nanjing, Jiangsu, 210016, PR China}

\affil[2] {\footnotesize MIIT Key Laboratory of Mathematical Modelling and High Performance Computing of Air  Vehicles, Nanjing, Jiangsu, 210016, PR
China }

\affil[3]{\footnotesize National Institute of Education, Nanyang Technological University, Singapore}

\date{}

\maketitle

\begin{abstract}
For a connected graph $G$, let $\tau(G)$, $\alpha(G)$ and $\alpha^*(G)$ denote its numbers of spanning trees, acyclic orientations and totally cyclic orientations, respectively. We prove that $\alpha(G)\alpha^*(G)\geq\tau(G)^2$ for every connected bridgeless chordal graph.
\end{abstract}

\noindent\textbf{Keywords:} Merino--Welsh conjecture; chordal graph; spanning tree; acyclic orientation; totally cyclic orientation; Tutte
polynomial\par
\smallskip
\noindent\textbf{2020 Mathematics Subject Classification:} 05C05, 05C31, 05C30, 05C75\par
\medskip

\section{Introduction}
All graphs are finite and loopless. Graphs in the main theorem are simple. Auxiliary graphs may have parallel edges. Let $G=(V(G),E(G))$ be a graph with vertex set $V(G)$ and edge set $E(G)$. A \emph{spanning tree} of $G$ is a connected acyclic spanning subgraph of $G$. An \emph{orientation} of $G$ is an assignment of a direction to each edge of $G$. An
orientation is \emph{acyclic} if it contains no directed cycle, and it is \emph{totally cyclic} if every directed edge belongs to a directed cycle.

For a connected graph $G$, let $\tau(G)$, $\alpha(G)$ and $\alpha^*(G)$ denote its numbers of spanning trees, acyclic orientations and totally cyclic orientations, respectively. A \emph{bridge} is an edge whose deletion increases the number of connected components. Merino and Welsh~\cite{MW} conjectured that every connected loopless bridgeless graph satisfies
\begin{equation}\label{eq:original}
\max\{\alpha(G),\alpha^*(G)\}\geq\tau(G).
\end{equation}
Its additive and multiplicative forms, discussed by Conde and Merino~\cite{CM}, are
\begin{equation*}
\alpha(G)+\alpha^*(G)\geq2\tau(G),\qquad \alpha(G)\alpha^*(G)\geq\tau(G)^2.
\end{equation*}
The multiplicative inequality implies the additive inequality by the arithmetic-geometric mean inequality, and the additive inequality implies \eqref{eq:original}.  A bridge lies on no cycle, so a graph with a bridge has $\alpha^*(G)=0$. A graph with a loop has no acyclic orientation. Thus the loopless and bridgeless hypotheses are necessary for the multiplicative inequality when $G$ is connected.

These counting functions are evaluations of the Tutte polynomial. To specify the convention, for a graph $G$ and $A\subseteq E(G)$, let $k(A)$ denote the number of connected components of the spanning subgraph $(V(G),A)$. Then the \emph{Tutte polynomial} of $G$ is defined by
\[
T_G(x,y)=\sum_{A\subseteq E(G)} (x-1)^{k(A)-k(E(G))}(y-1)^{|A|-|V(G)|+k(A)}.
\]
For a connected $G$,
\begin{equation}\label{eq:tutte}
\tau(G)=T_G(1,1),\qquad \alpha(G)=T_G(2,0),\qquad \alpha^*(G)=T_G(0,2).
\end{equation}
See Ellis-Monaghan and Merino~\cite{EM} for background. For disconnected graphs, the orientation counts in \eqref{eq:tutte} remain valid, whereas $T_G(1,1)$ then counts spanning forests with one tree in each connected component.

The Merino--Welsh conjecture has been studied for a number of graph classes. Conde and Merino~\cite{CM} verified the conjecture for threshold graphs and proposed chordal graphs as a class for further study. Thomassen~\cite{Thomassen} obtained inequalities for several sparse and dense families; in particular, every connected simple graph of maximum degree at most three satisfies $\tau(G)\leq\alpha(G)$. Noble and Royle~\cite{NR} proved the multiplicative inequality for loopless bridgeless series-parallel graphs by introducing auxiliary counting parameters for two-terminal graphs. Recently, Chen and Xie~\cite{ChenXie} obtained quantitative Merino--Welsh inequalities for graph joins.

The analogous matroid conjecture asserts that
\[
T_M(2,0)T_M(0,2)\geq T_M(1,1)^2
\]
for every matroid $M$ without loops and coloops. Jackson~\cite{Jackson} proved the weaker inequality
\[
T_M(3,0)T_M(0,3)\geq T_M(1,1)^2,
\]
and further Tutte polynomial inequalities for matroids were obtained by Ch\'avez-Lomel\'i, Merino, Noble and Ram\'irez-Ib\'a\~nez~\cite{CLMNR}. The multiplicative conjecture for matroids  was subsequently proved for lattice path matroids \cite{KMR} and  split matroids \cite{FS}. More recently, Beke, Cs\'aji, Csikv\'ari and Pituk~\cite{BCPfalse} showed that the matroidal version of the Merino--Welsh conjecture is false in general. Their work on the permutation Tutte polynomial~\cite{BCPperm} also reduced the constant $3$ in Jackson's inequality to $2.9243$, and Csikv\'ari~\cite{Csikvari} subsequently reduced it to $2.35$. In contrast, the original conjecture for graphs remains open in general.

Chordal graphs form a natural class for studying the Merino--Welsh conjecture. They admit perfect elimination orderings and clique tree decompositions, yet may contain cliques of arbitrarily large order.  In particular, chordal graphs need not be series-parallel. Thus the result of Noble and Royle \cite{NR} does not cover this class. In this paper, we prove the multiplicative Merino--Welsh inequality for every connected bridgeless chordal graph.

\begin{theorem}\label{thm:main}
Every connected bridgeless chordal graph $G$ satisfies
\[
\alpha(G)\alpha^*(G)\geq\tau(G)^2.
\]
Consequently, it also satisfies the additive and original Merino--Welsh inequalities.
\end{theorem}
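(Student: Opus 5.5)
We argue by induction on $|V(G)|$, using the clique-tree structure of chordal graphs together with auxiliary two-terminal or multi-terminal counting parameters, in the spirit of Noble and Royle~\cite{NR}. The Tutte polynomial is multiplicative over blocks (if $G=G_1\cup G_2$ with $|V(G_1)\cap V(G_2)|=1$, then $T_G=T_{G_1}T_{G_2}$). Every block of a bridgeless chordal graph is a $2$-connected chordal graph, so it suffices to treat $2$-connected chordal graphs; the single-edge case cannot occur since $G$ is bridgeless. The additive and original inequalities then follow at once by AM--GM, as noted in the introduction.

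For a $2$-connected chordal graph $G$, we choose a simplicial vertex $v$ whose neighbourhood is a clique $K$ of order $k\ge 2$. Then $G$ is obtained from $H=G-v$ (again chordal and $2$-connected, or a single edge when $G$ is a triangle) by gluing a $K_{k+1}$ along $K_k$. The orientation counts behave well under this operation.
\begin{itemize}
\item \emph{Acyclic orientations.} An acyclic orientation of $H$ induces a linear order on the clique $K$, and $v$ may be inserted in any of $k+1$ positions. Hence $\alpha(G)=(k+1)\alpha(H)$ exactly.
\item \emph{Spanning trees.} A spanning tree of $G$ restricts to a spanning forest of $H$, and $v$ must join the components meeting $K$. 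We therefore refine $\tau(H)$ by the partition of $K$ induced by spanning forests of $H$ in which every component meets $K$. Concretely, $\tau(G)=\sum_{\pi} f_\pi(H)\,(\#\text{ways }v\text{ connects the blocks of }\pi)$, where the latter count is $\prod_{B\in\pi}|B|$.
\item \emph{Totally cyclic orientations.} In the same way, $\alpha^*(G)$ decomposes according to the reachability pattern that $H$'s orientation induces on $K$, with weights counting the $2^k$ possible orientations of the $k$ edges at $v$ that make every edge lie on a directed cycle.
\end{itemize}

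The core step is to find inequalities between these refined parameters that are preserved under adding a simplicial vertex and that imply $\alpha^*(G)\tau(G)^{-2}\alpha(G)\ge 1$. Since $\alpha$ scales exactly by $k+1$, the target reduces to an inequality of the form $\alpha^*(G)/\alpha^*(H)\ge \tau(G)^2/\big((k+1)\tau(H)^2\big)$, after strengthening the inductive hypothesis.

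The refined inequalities will be obtained in two stages.
\begin{enumerate}
\item For totally cyclic orientations, use that $K$ is a clique. Already the orientations in which all of $K$ lies in one strong component contribute about $(2^k-2)\alpha^*(H)$. More precisely, we pass to the strongly connected orientations of the clique edges in a strengthened count, and bound the resulting ratio from below.
\item For spanning trees, bound $\tau(G)/\tau(H)$ from above by a quantity like $k+1$ times a correction. Kirchhoff/effective-resistance arguments give $\tau(G)/\tau(H)=\det(L_K+ I)$-type expressions in the Schur complement onto $K$.
\end{enumerate}
The clean approach is to express $\tau(G)/\tau(H)=\det(I+R)$ with $R$ related to effective resistances on $K$, which are at most those of $K_k$ inside $H$ (Rayleigh monotonicity, since $H\supseteq K_k$). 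This gives $\tau(G)/\tau(H)\le (k+1)^{k-1}\cdot$ bounded factor. Comparing $(k+1)^{2(k-1)}$-type growth against $(k+1)(2^k-2)$ fails for large $k$, however, so per-vertex comparison alone is too lossy.

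The main obstacle is therefore that the ratio must be controlled globally rather than vertex by vertex. Our first remedy is to process a whole maximal clique at once: build $G$ by gluing cliques $K_{m}$ along separating cliques in a clique tree. We compare directly with the known asymptotics $\alpha(K_m)=m!$, $\tau(K_m)=m^{m-2}$, $\alpha^*(K_m)\ge$ the number of strong tournament-like orientations, which is of order $2^{\binom m2}$ and dominates $m^{2(m-2)}/m!$.

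The gluing lemma we would aim to prove is the following. If $G=G_1\cup G_2$ with $G_1\cap G_2=K_s$, then
\[
\alpha(G)=\alpha(G_1)\alpha(G_2)/s!,\qquad \tau(G)\le \tau(G_1)\tau(G_2)/s^{s-2},\qquad \alpha^*(G)\ge \alpha^*(G_1)\alpha^*(G_2)/c_s,
\]
with $c_s$ chosen so that $(s!)\,c_s\le s^{2(s-2)}$ fails in the right direction. Here we expect trouble: the $\tau$ inequality is true via negative correlation/Rayleigh (by conditioning on the forest on $K_s$), but the $\alpha^*$ bound needs $c_s\le \alpha^*(K_s)$, and the inequality $\alpha^*(K_s)\,s!\ge\tau(K_s)^2$ is precisely the clique case. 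We would thus verify the clique case explicitly, via counts of strong tournaments, for small $s$ by hand and for large $s$ asymptotically. We would then establish the supermultiplicativity of $\alpha^*$ across clique separators by showing that gluing two totally cyclic orientations that agree on $K_s$ yields a totally cyclic orientation. That gluing step, together with the correct normalisation, is where we expect the real difficulty to lie.
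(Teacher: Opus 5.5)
You have the right outer layer. The block reduction, the identity $\alpha(G)=(k+1)\alpha(H)$, the bound $\alpha^*(G)\ge(2^k-2)\alpha^*(H)$ (a totally cyclic orientation of connected $H$ is strongly connected), and Rayleigh monotonicity for $\tau$ are all ingredients of the paper. But you never carry out the inductive step: the ``strengthened hypothesis'' is not formulated. Your diagnosis of where the naive step fails is also inverted. Rayleigh monotonicity bounds $\tau(G)/\tau(H)$ by $\tau(K_{k+1})/\tau(K_k)=k(1+1/k)^{k-1}<ek$, not by $(k+1)^{k-1}$; you dropped the division by $\tau(K_k)$. Since $\bigl(k(1+1/k)^{k-1}\bigr)^2\le(k+1)(2^k-2)$ for every $k\ge4$, simplicial vertices of degree at least four are harmless; this is Lemma~\ref{lem:highdegree}. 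The estimates break only at small degree. For $k=2$ they give only $3\cdot2=6$ against $(8/3)^2$ when the attaching edge lies in a triangle. For $k=3$ they give $24$ against $(16/3)^2$ when the attaching triangle is a maximal clique of $H$. So the whole difficulty lies in the triangles ($2$-tree parts) and the $K_4$'s, and your proposal says nothing about them.

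The clique-sum lemma you propose cannot fill this gap. Given $\alpha(G)=\alpha(G_1)\alpha(G_2)/s!$ and $\tau(G)\le\tau(G_1)\tau(G_2)/\tau(K_s)$ (both true), you would need $\alpha^*(G)\ge\alpha^*(G_1)\alpha^*(G_2)/c_s$ with $s!\,c_s\le\tau(K_s)^2$. For $s=2$ this reads $\alpha^*(G)\ge2\alpha^*(G_1)\alpha^*(G_2)$, which is false for two triangles, since $\alpha^*(K_4-e)=6<8$. For $s=3$ it reads $\alpha^*(G)\ge\frac23\alpha^*(G_1)\alpha^*(G_2)$, which is false for two copies of $K_4$ sharing a triangle, since $\alpha^*(K_5-e)=204<384$. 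Indeed $\alpha\alpha^*/\tau^2$ equals $27/16$ for $K_4-e$ but $16/9$ for the product of two triangles, so no lemma making this ratio supermultiplicative across clique separators can hold. Normalising by $c_s=\alpha^*(K_s)$ (for $s\ge3$) works against you, because $\tau(K_s)^2\le s!\,\alpha^*(K_s)$ is exactly the clique case.

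Pairing totally cyclic orientations of both sides also undercounts. For $K_4$ glued along an edge $uv$, it gives $\alpha^*(K_4)/2=12$ extensions per orientation of $H$. Combined with $\alpha(J)=12\alpha(H)$, this yields $144<(40/3)^2$ even with the paper's sharper $\tau$ bound. Counting almost totally cyclic orientations of $K_4-uv$ instead gives $18$, and $216>(40/3)^2$.

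The missing ideas are the ones the paper uses:
\begin{itemize}
\item count extensions of a fixed strongly connected orientation of $H$, so the new part need only be almost totally cyclic;
\item compare $\tau$ using the parent clique $P\supsetneq S$ from a clique tree, i.e.\ against $K_{|P|}\cup K_q$ rather than $K_{|S|}\cup K_q$;
\item treat the hanging $2$-trees as edge decorations, via the two-terminal inequality $\bigl(\alpha(D)+\alpha_2(D)\bigr)\alpha_2^*(D)\ge2\tau_2(D)^2\exp\bigl(4r(D)/5\bigr)$ for $D=T-st$, whose effective-conductance term bounds the loss per decoration by $19/20$ or $35/36$;
\item check the $K_4$ attachments by explicit counts.
\end{itemize}
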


Denote by $K_{k}$ the \emph{complete graph} of order $k$. We use the convention $\tau(K_1)=\alpha(K_1)=\alpha^*(K_1)=1$, so the one-vertex case is included in Theorem~\ref{thm:main}. Since all graphs considered in the main theorem are simple, the loopless hypothesis is automatic.

The paper is organized as follows. Section~\ref{sec:prelim} reviews relevant concepts and results in graph theory and introduces the counting parameters. Section~\ref{sec:networks} records a spanning tree formula for unions of two complete graphs and proves a comparison for spanning tree counts under attachments. Section~\ref{sec:interface} proves a quantitative inequality for nontrivial rooted $2$-trees with their root edges deleted and applies it to edge decorations.
Section~\ref{sec:cliques} gives sufficient conditions for complete graph attachments with edge decorations to preserve the multiplicative Merino--Welsh inequality, treating $K_4$ attachments along an edge and along a triangle separately. Section~\ref{sec:mainproof} proves the $2$-connected case by induction on the number of maximal cliques of order at least four and completes the proof of Theorem~\ref{thm:main} by block decomposition.

\section{Preliminaries}\label{sec:prelim}
\subsection{Chordal graphs and clique trees}\label{sec:structure}
For a graph $G$ and a subset $X\subseteq V(G)$, the \emph{induced subgraph} $G[X]$ has vertex set $X$ and contains all edges of $G$ with both ends in $X$. For a vertex $v\in V(G)$, let $G-v$ denote the graph obtained by deleting $v$ and all edges incident with $v$; equivalently, $G-v=G[V(G)\setminus\{v\}]$. For $e\in E(G)$, let $G-e$ denote the graph obtained by deleting $e$ and retaining all vertices.  For an edge $e\notin E(G)$ joining two distinct vertices of $G$, let $G+e$ denote the graph obtained by adding $e$. For a vertex $v$ of a simple graph $G$, its \emph{neighborhood} is denoted by $N_G(v)$ and its degree is $d_G(v)=|N_G(v)|$. For graphs $G_1$ and $G_2$, their \emph{union} $G_1\cup G_2$ is the graph defined by
\[
V(G_1\cup G_2)=V(G_1)\cup V(G_2),\qquad E(G_1\cup G_2)=E(G_1)\cup E(G_2).
\]
Each common edge is retained only once. Distinct parallel edges remain distinct. The graph $G$ is \emph{$2$-connected} if it has at least three vertices and $G-v$ is connected for every $v\in V(G)$.

A \emph{clique} is a vertex set inducing a complete graph. A \emph{maximal clique} is one not properly contained in another clique; it need not have maximum size.  When no ambiguity arises, we identify a clique with the complete subgraph it induces. A graph is \emph{chordal} if it has no induced cycle of length at least four. A vertex is \emph{simplicial} if its neighborhood is a clique. A \emph{perfect elimination ordering} is an ordering $v_1,v_2,\ldots,v_n$ in which $v_i$ is simplicial in $G[\{v_i,v_{i+1},\ldots,v_n\}]$ for each $i$.

Let $\mathcal C(G)$ denote the set of maximal cliques of a connected chordal graph $G$. The maximal cliques in $\mathcal C(G)$ cover all edges of $G$. A \emph{clique tree} of $G$ is a tree $\mathcal T$ with vertex set $\mathcal C(G)$, satisfying the \emph{running-intersection property}: for every vertex $v\in V(G)$, the nodes in $\mathcal T$ corresponding to maximal cliques containing $v$ induce a connected subtree of $\mathcal T$. The edge set $E(\mathcal T)$ is chosen subject to this condition and need not be unique. In particular, two intersecting maximal cliques need not be adjacent in a given clique tree.  We fix a clique tree whenever one is needed.

We use the standard facts that a graph is chordal if and only if it has a perfect elimination ordering, and that every connected chordal graph admits a clique tree. See Blair and Peyton~\cite{BP} for  these structural properties. The theory of chordal graphs goes back to Dirac~\cite{Dirac}.

\begin{lemma}[\cite{BP}]\label{lem:separators}
Let $G$ be $2$-connected and chordal, and let $C$ and $P$ be adjacent nodes of a clique tree of $G$. Then $S=C\cap P$ has at least two vertices and is a proper subset of both $C$ and $P$.
\end{lemma}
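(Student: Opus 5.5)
We prove Lemma~\ref{lem:separators} from the running-intersection property, the maximality of the cliques in $\mathcal C(G)$, and $2$-connectivity. Throughout, $C$ and $P$ are adjacent nodes of the fixed clique tree $\mathcal T$ and $S=C\cap P$.

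\emph{Properness.} Since $C$ and $P$ are distinct maximal cliques, neither contains the other. Hence $S\subsetneq C$ and $S\subsetneq P$.

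\emph{Size at least two.} Deleting the edge $CP$ from $\mathcal T$ leaves two subtrees, $\mathcal T_C\ni C$ and $\mathcal T_P\ni P$. Let $V_C$ (resp.\ $V_P$) be the union of the cliques in $\mathcal T_C$ (resp.\ $\mathcal T_P$), and put $A=V_C\setminus S$ and $B=V_P\setminus S$.

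\begin{itemize}
\item \textbf{Overlap lies in $S$.} If $v\in V_C\cap V_P$, then $v$ lies in some clique of $\mathcal T_C$ and in some clique of $\mathcal T_P$. The path in $\mathcal T$ between these two cliques uses the edge $CP$. By running intersection, every clique on that path contains $v$, so $v\in C\cap P=S$. Thus $V(G)$ is the disjoint union $A\cup S\cup B$.
\item \textbf{Both sides nonempty.} By properness, $A\supseteq C\setminus S\neq\emptyset$ and $B\supseteq P\setminus S\neq\emptyset$.
\item \textbf{No edges between $A$ and $B$.} Every edge lies in some maximal clique, which is contained in $V_C$ or in $V_P$. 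An edge $ab$ with $a\in A$ and $b\in B$ would therefore have both ends in $V_C$, forcing $b\in V_C\cap V_P=S$, or both ends in $V_P$, forcing $a\in S$. Either way we have a contradiction.
\end{itemize}

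Hence $S$ separates $A$ from $B$ in $G$. The case $S=\emptyset$ is impossible because $G$ is connected. If $|S|=1$, say $S=\{s\}$, then $G-s$ is disconnected, contradicting $2$-connectivity. Therefore $|S|\ge 2$.

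The only step needing care is the separation claim, specifically that $V_C\cap V_P=S$. This is exactly where running intersection enters, through the fact that every path in $\mathcal T$ between the two components must cross the edge $CP$. Everything else follows directly.
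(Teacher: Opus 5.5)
Your proof is correct. The paper does not prove this lemma; it only cites Blair and Peyton. Your first two bullet points (that $V_C\cap V_P=S$ and that there are no edges between $A$ and $B$) are exactly the content of Lemma~\ref{lem:clique-tree-cut}, which the paper also takes from that source without proof. Your argument therefore makes both citations self-contained: you derive that separation property directly from the running-intersection property and then finish with $2$-connectivity, which is the standard route.

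The one step you leave implicit is $V(G)=V_C\cup V_P$. It holds because every vertex lies in some maximal clique, and you need it to conclude that $G-s$ has vertex set $A\cup B$.
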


\begin{lemma}[\cite{BP}]\label{lem:clique-tree-cut}
Let $\mathcal T$ be a clique tree of a connected chordal graph $G$, and let $C$ and $P$ be adjacent nodes. Let $\mathcal T_1$ and $\mathcal T_2$ be the components of $\mathcal T-CP$ containing $C$ and $P$, respectively. For $i=1,2$, let $V_i$ be the union of the maximal cliques corresponding to the nodes of $\mathcal T_i$. Then $V_1\cap V_2=C\cap P$, and no edge of $G$ joins a vertex of $V_1\setminus(C\cap P)$ to a vertex of $V_2\setminus(C\cap P)$.
\end{lemma}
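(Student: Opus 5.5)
The plan is to derive both assertions directly from the running-intersection property of $\mathcal T$ and the fact that the maximal cliques cover all edges of $G$; no chordality beyond the existence of the clique tree is needed. Write $S=C\cap P$. Removing the edge $CP$ from the tree $\mathcal T$ leaves exactly two components, $\mathcal T_1\ni C$ and $\mathcal T_2\ni P$, and every node of $\mathcal T$ lies in exactly one of them. Moreover, the unique path in $\mathcal T$ between a node of $\mathcal T_1$ and a node of $\mathcal T_2$ uses the edge $CP$, and hence passes through both $C$ and $P$.

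\emph{Step 1: $V_1\cap V_2=S$.} Since $C$ is a node of $\mathcal T_1$ and $P$ is a node of $\mathcal T_2$, we have $C\subseteq V_1$ and $P\subseteq V_2$, so $S\subseteq V_1\cap V_2$. Conversely, let $v\in V_1\cap V_2$, and choose maximal cliques $Q_1\in V(\mathcal T_1)$ and $Q_2\in V(\mathcal T_2)$ containing $v$. By the running-intersection property, the nodes containing $v$ induce a connected subtree of $\mathcal T$. This subtree contains $Q_1$ and $Q_2$, so it contains the whole $Q_1$--$Q_2$ path. That path passes through $C$ and $P$, so $v\in C\cap P=S$.

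\emph{Step 2: no edges across.} Let $uv\in E(G)$. Since the maximal cliques cover all edges, some $Q\in\mathcal C(G)$ contains both $u$ and $v$, and $Q$ is a node of $\mathcal T_1$ or of $\mathcal T_2$. By symmetry, suppose $Q\in V(\mathcal T_1)$, so $u,v\in V_1$. If one endpoint, say $u$, were in $V_2\setminus S$, then $u\in V_1\cap V_2=S$ by Step 1, a contradiction. Hence no edge joins $V_1\setminus S$ to $V_2\setminus S$.

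The only point requiring care, and the step I regard as the main one, is the path observation in Step 1. It relies on the fact that $\mathcal T-CP$ has exactly the two components $\mathcal T_1$ and $\mathcal T_2$, so that any path between them must use the edge $CP$. Everything else is immediate from the definitions.
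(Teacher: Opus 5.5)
Your argument is correct and complete. Both assertions follow from the running-intersection property and the fact that maximal cliques cover every edge, as you show; the paper gives no proof of its own and cites Blair and Peyton for this standard fact, so your derivation is the usual one.
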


The chordality assertion in the following lemma follows from \cite[Proposition~5.5.1]{Diestel}, while the $2$-connectedness assertion follows directly from the definition.
\begin{lemma}\label{lem:gluing}
Let $G_1$ and $G_2$ be chordal graphs such that $V(G_1)\cap V(G_2)=S$, where $S$ is a clique in both graphs. Then $G_1\cup G_2$ is chordal. If both $G_1$ and $G_2$ are $2$-connected and $|S|\geq2$, then $G_1\cup G_2$ is $2$-connected.
\end{lemma}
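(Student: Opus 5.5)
The statement to prove is Lemma~\ref{lem:gluing}: chordality and $2$-connectedness of $G=G_1\cup G_2$ when the two graphs meet in a common clique $S$. I would handle the two assertions separately. Each uses only the fact that $S=V(G_1)\cap V(G_2)$ separates $V(G_1)\setminus S$ from $V(G_2)\setminus S$ in $G$. This holds because every edge of $G$ lies in $G_1$ or in $G_2$, so no edge joins $V(G_1)\setminus S$ to $V(G_2)\setminus S$.

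For chordality, I take an induced cycle $Z$ of $G$ of length at least four and aim for a contradiction. If $V(Z)\subseteq V(G_i)$ for some $i$, then $Z$ is an induced cycle of $G_i$. Indeed, $G[V(G_i)]=G_i$, since an edge of $G_2$ with both ends in $V(G_1)$ has both ends in $S$, and $S$ is a clique in $G_1$ as well. This contradicts the chordality of $G_i$.

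Otherwise $Z$ has a vertex $a\in V(G_1)\setminus S$ and a vertex $b\in V(G_2)\setminus S$. The two $a$--$b$ paths along $Z$ each pass through $S$, so $Z$ contains two nonadjacent-on-$Z$ vertices $s,t\in S$, one on each path. Since $S$ is a clique, $st$ is an edge of $G$, and it is a chord of $Z$. This contradicts $Z$ being induced. The only care needed is to check that $s$ and $t$ are distinct and not consecutive on $Z$; this holds because $a$ and $b$ lie strictly between them on the two arcs. This step is essentially the content of \cite[Proposition~5.5.1]{Diestel}, so I could simply cite it.

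For $2$-connectedness, $|V(G)|\geq|V(G_1)|\geq3$. Fix a vertex $v$; I show that $G-v$ is connected. Since $G_1$ and $G_2$ are $2$-connected, both $G_1-v$ and $G_2-v$ are connected, where deleting a vertex not in $G_i$ does nothing. Because $|S|\geq2$, some vertex of $S$ other than $v$ lies in both $G_1-v$ and $G_2-v$. Hence their union, which is $G-v$, is connected.

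I expect no real obstacle in either part. The only delicate point is the chord argument in the chordality part, namely ensuring that $s$ and $t$ are not consecutive on $Z$.
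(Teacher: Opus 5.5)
Your proof is correct and follows the paper's route. For chordality the paper only cites Diestel's Proposition~5.5.1, and your argument is the standard one behind that citation: an induced cycle meeting both $V(G_1)\setminus S$ and $V(G_2)\setminus S$ must contain two vertices of the clique $S$ that are nonconsecutive on it, and these span a chord. For $2$-connectedness the paper says it ``follows directly from the definition,'' which is exactly your observation that $G-v=(G_1-v)\cup(G_2-v)$ is a union of two connected graphs sharing a vertex of $S\setminus\{v\}$.
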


\begin{definition}\label{def:2tree}
A \emph{$2$-tree} is obtained from $K_2$ by repeatedly adding a new vertex adjacent precisely to the two ends of an existing edge. A \emph{rooted $2$-tree} is a $2$-tree with a specified edge $st$, called its \emph{root edge}. It is \emph{nontrivial} if it is not $K_2$.
\end{definition}

Every nontrivial $2$-tree is $2$-connected and chordal, and all of its maximal cliques are triangles. Conversely, every $2$-connected chordal graph whose maximal cliques are all triangles is a $2$-tree. Indeed, root a clique tree at any maximal clique and order its nodes so that each parent precedes its children. By the running-intersection property, the intersection of each new triangle with the preceding union is exactly its intersection with its parent.  By Lemma~\ref{lem:separators}, this intersection is an edge, so each new triangle contributes  one new vertex. Given an edge $st$, choose a triangle containing $st$ as the root and first add its third vertex to $st$. The remaining triangles can then be added in the above order. Thus a $2$-tree construction may begin with any prescribed root edge.

\begin{definition}\label{def:edge-attachment}
Let $H$ and $J$ be vertex-disjoint graphs with specified edges $xy\in E(H)$ and $st\in E(J)$. An \emph{edge attachment} of $J$ to $H$ along $xy$ identifies $s$ with $x$, $t$ with $y$, and the edge $st$ with $xy$. The common edge is retained once, and all other edges are retained. After identification, the resulting graph is $H\cup J$, with $V(H)\cap V(J)=\{x,y\}$ and $E(H)\cap E(J)=\{xy\}$.
\end{definition}

A nontrivial rooted $2$-tree $T$ attached to an edge $xy$ of $H$ along its root edge is called an \emph{edge decoration} of $xy$. Edge decorations are attached successively along the prescribed edges of $H$. Decorations on the same edge can be combined into a single rooted $2$-tree.

\subsection{Two-terminal graphs and counting parameters}
A \emph{two-terminal graph} $(G,s,t)$ is a graph $G$ with two distinct specified vertices $s$ and $t$, called its \emph{terminals}. The terminals $s$ and $t$ are called the \emph{source} and the \emph{sink}, respectively; these names do not impose restrictions on an orientation. When the terminals are understood, we simply write $G$. Unless stated otherwise, we assume that the underlying graph $G$ is connected.

Following Noble and Royle~\cite[Section~3]{NR}, we introduce three additional notions for two-terminal graphs. A \emph{spanning $2$-forest} of a two-terminal graph $G$ is a spanning subgraph with two components, each of which is a tree, with one containing the source and the other containing the sink. A \emph{very acyclic orientation} of $G$ is an acyclic orientation in which there is no directed path between the two terminals. An \emph{almost totally cyclic orientation} of $G$ is an orientation in which every edge lies on a directed cycle or on a directed path between the two
terminals. The six counting parameters used below are listed in Table~\ref{tab:twoterminal-parameters}. They are unchanged when the two terminals are interchanged.
\begin{table}[htbp]
\centering
\begin{tabular}{@{\hspace{6pt}}l@{\hspace{18pt}}l@{\hspace{6pt}}}
\toprule
Notation & Objects counted\\
\midrule
$\tau(G)$ & Spanning trees\\[2pt]
$\tau_2(G)$ & Spanning $2$-forests separating the terminals\\[2pt]
$\alpha(G)$ & Acyclic orientations\\[2pt]
$\alpha_2(G)$ & Very acyclic orientations\\[2pt]
$\alpha^*(G)$ & Totally cyclic orientations\\[2pt]
$\alpha_2^*(G)$ & Almost totally cyclic orientations\\
\bottomrule
\end{tabular}
\caption{Counting parameters for two-terminal graphs $G$.} \label{tab:twoterminal-parameters}
\end{table}

For distinct vertices $u,v\in V(G)$, let $G/\{u,v\}$ denote the graph obtained by identifying $u$ and $v$, deleting any resulting loops and retaining parallel edges. Then
\begin{equation}\label{spanning_forest_number}
\tau_2(G)=\tau(G/\{s,t\}),
\end{equation}
since identifying $s$ and $t$ gives a bijection between spanning $2$-forests separating the terminals and spanning trees of $G/\{s,t\}$. Connectedness of $G$ implies that $\tau(G)>0$ and $\tau_2(G)>0$. We set
\begin{equation}\label{eq:terminal-r}
r(G)=\frac{\tau(G)}{\tau_2(G)}.
\end{equation}

Regard $G$ as an electrical network with unit resistance on every edge. Then $r(G)$ is the effective conductance between $s$ and $t$ in $G$, and its reciprocal $1/r(G)=\tau_2(G)/\tau(G)$ is the effective resistance, also called the resistance distance; see Kassel, Kenyon and Wu~\cite[Section~2.1, equation~(3)]{KKW}. The Dirichlet principle \cite[Exercise~1.3.11]{DoyleSnell} gives
\begin{equation}\label{eq:r-variational}
r(G)=\min\left\{\sum_{uv\in E(G)}(z_u-z_v)^2:
z:V(G)\to\mathbb R,\ z_s=1,\ z_t=0\right\}.
\end{equation}
Here $z_v=z(v)$, and parallel edges are counted separately in the sum. The following lemma gives a form of Rayleigh monotonicity that allows additional vertices; see D\"orfler and Bullo~\cite[Theorem~III.14(2)]{DB} for the corresponding effective resistance comparison.

\begin{lemma}\label{lem:r-monotonicity}
Let $G$ and $G'$ be connected graphs with the same terminals $s,t$. If $G$ is a subgraph of $G'$, then $r(G')\geq r(G)$.
\end{lemma}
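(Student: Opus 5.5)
The plan is to derive Lemma~\ref{lem:r-monotonicity} directly from the variational characterization \eqref{eq:r-variational}, with no electrical-network theory beyond it. Write the energy of a potential $z$ on a graph $H$ as
\[
\mathcal E_H(z)=\sum_{uv\in E(H)}(z_u-z_v)^2 .
\]
The admissible potentials are maps $z:V(H)\to\mathbb R$ with $z_s=1$ and $z_t=0$. By \eqref{eq:r-variational}, $r(H)$ is the minimum of $\mathcal E_H$ over these potentials. Since $G$ is connected, $r(G)=\tau(G)/\tau_2(G)$ is a well-defined positive number. The same holds for $G'$, so both sides of the claimed inequality make sense.

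First I take a potential $z'$ on $V(G')$ attaining the minimum for $G'$. If one prefers not to rely on attainment, any admissible $z'$ will do, followed by an infimum at the end. I then restrict $z'$ to $V(G)\subseteq V(G')$ and call the result $z$. Because $G$ and $G'$ share the terminals $s,t$, the restriction still satisfies $z_s=1$ and $z_t=0$, so it is admissible for $G$.

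Second, every edge of $G$ is an edge of $G'$ with the same ends, since $G$ is a subgraph of $G'$. Parallel edges are counted separately on both sides, which matches the convention in \eqref{eq:r-variational}. Every summand is a nonnegative square. Hence
\[
r(G)\le \mathcal E_G(z)=\sum_{uv\in E(G)}(z'_u-z'_v)^2\le \sum_{uv\in E(G')}(z'_u-z'_v)^2=\mathcal E_{G'}(z')=r(G').
\]
This gives $r(G')\ge r(G)$. The extra vertices of $G'$ cause no difficulty: restricting a potential simply forgets them, while the extra edges only add nonnegative terms.

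There is essentially no obstacle here. The one point needing care is that \eqref{eq:r-variational} is applied to each of $G$ and $G'$ as a connected graph with terminals $s$ and $t$, and that connectedness is what ensures $r$ is finite and equal to the ratio \eqref{eq:terminal-r}; both are hypotheses of the lemma. A purely combinatorial route is also available, for instance deletion--contraction on $\tau$ and $\tau_2$ showing that $r$ does not decrease when an edge is added. That route would also have to handle added vertices, so the variational argument is the one I would use.
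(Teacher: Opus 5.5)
Your proposal is correct and is essentially the paper's proof: you restrict an admissible potential for $G'$ to $V(G)$, observe that the extra edges of $G'$ only add nonnegative squares, and then minimize in \eqref{eq:r-variational}. Your remark about working with an arbitrary admissible $z'$ and taking the minimum at the end matches exactly how the paper phrases the argument.
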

\begin{proof}
For any $z:V(G')\to\mathbb R$ with $z_s=1$ and $z_t=0$, its restriction to $V(G)$ is admissible in \eqref{eq:r-variational}. Since every additional edge contributes a nonnegative term,
\[
\sum_{uv\in E(G')}(z_u-z_v)^2\geq\sum_{uv\in E(G)}(z_u-z_v)^2\geq r(G).
\]
Taking the minimum over $z$ gives $r(G')\geq r(G)$.
\end{proof}

We collect the six counting parameters in Table~\ref{tab:twoterminal-parameters} into the \emph{parameter vector}
\[
\mathbf{v}(G)=\bigl(\tau(G),\tau_2(G),\alpha(G),\alpha_2(G),\alpha^*(G),\alpha_2^*(G)\bigr).
\]
For the single edge $K_2$ and the path $P_3$ on three vertices, each with its ends as terminals, respectively,
$\mathbf{v}(K_2)=(1,1,2,0,0,2)$ and $\mathbf{v}(P_3)=(1,2,4,2,0,2)$.

Take two-terminal graphs $(G_i,s_i,t_i)$, $i=1,2$, with vertex-disjoint underlying graphs. Their \emph{parallel connection} $G_1\paral G_2$ is obtained by identifying $s_1$ with $s_2$ and $t_1$ with $t_2$; the two identified vertices are the new source and sink. Their \emph{series connection} $G_1\ser G_2$ is obtained by identifying $t_1$ with $s_2$, with $s_1$ and $t_2$ as the new source and sink. All edges are retained. A graph obtainable from the two-terminal graph $K_2$ by repeated series and parallel connections is called a \emph{two-terminal series-parallel graph}.

The following composition identities of Noble and Royle~\cite[Lemmas~3.3--3.4]{NR} extend to arbitrary connected two-terminal graphs by the same counting arguments used in their proofs.

\begin{lemma}[\cite{NR}]\label{lem:series-composition}
Let $G_1$ and $G_2$ be connected two-terminal graphs. If $G=G_1\ser G_2$, then
\begin{align*}
&\tau(G)=\tau(G_1)\tau(G_2),\\
&\tau_2(G)=\tau(G_1)\tau_2(G_2)+\tau_2(G_1)\tau(G_2),\\
&\alpha(G)=\alpha(G_1)\alpha(G_2),\\
&\alpha_2(G)=\alpha(G_1)\alpha(G_2)-\frac{\bigl(\alpha(G_1)-\alpha_2(G_1)\bigr)\bigl(\alpha(G_2)-\alpha_2(G_2)\bigr)}{2},\\
&\alpha_2^*(G) =\alpha_2^*(G_1)\alpha_2^*(G_2) -\frac{ \bigl(\alpha_2^*(G_1)-\alpha^*(G_1)\bigr) \bigl(\alpha_2^*(G_2)-\alpha^*(G_2)\bigr) }{2},\\
&\alpha^*(G)=\alpha^*(G_1)\alpha^*(G_2).
\end{align*}
\end{lemma}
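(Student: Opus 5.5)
The plan is to prove each of the six identities by a direct bijective or counting argument on $G=G_1\ser G_2$, where the cut vertex $w$ is the identification of $t_1$ with $s_2$. Every edge of $G$ lies in exactly one of $G_1,G_2$, and every cycle of $G$ lies entirely in one side. Hence an orientation (or spanning subgraph) of $G$ is just a pair of orientations (or subgraphs) of $G_1$ and $G_2$.

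\textbf{Spanning structures.} A spanning subgraph of $G$ is a spanning tree if and only if both restrictions are spanning trees, since $w$ is a cut vertex; this gives $\tau(G)=\tau(G_1)\tau(G_2)$. A spanning $2$-forest of $G$ separating $s_1$ from $t_2$ has exactly one of its two restrictions disconnected. If the restriction to $G_2$ is disconnected, it must be a $2$-forest of $G_2$ separating $s_2=w$ from $t_2$, and the restriction to $G_1$ is a spanning tree. The symmetric case contributes $\tau_2(G_1)\tau(G_2)$, which yields the second formula.

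\textbf{Acyclic and totally cyclic counts.} Since directed cycles lie within one side, an orientation of $G$ is acyclic, respectively totally cyclic, if and only if both restrictions are. This gives the formulas for $\alpha(G)$ and $\alpha^*(G)$.

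\textbf{Very acyclic orientations.} A directed path from $s_1$ to $t_2$ in $G$ must pass through $w$. It therefore splits into an $s_1\to t_1$ directed path in $G_1$ and an $s_2\to t_2$ directed path in $G_2$, and similarly for the reverse direction. For $i=1,2$, write $A_i=\alpha(G_i)-\alpha_2(G_i)$ for the number of acyclic orientations of $G_i$ with a terminal-to-terminal directed path. Reversing all edges is an involution that preserves acyclicity and swaps the two path directions. Moreover, no acyclic orientation has directed paths in both directions. So exactly $A_i/2$ orientations have an $s_i\to t_i$ path and $A_i/2$ have a $t_i\to s_i$ path. The bad orientations of $G$ are the pairs with aligned directions, and there are $2\cdot (A_1/2)(A_2/2)=A_1A_2/2$ of them. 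This gives the formula for $\alpha_2(G)$.

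\textbf{Almost totally cyclic orientations.} This is the step I expect to be the main obstacle, because edges in $G_1$ may be covered by paths that continue through $G_2$. Here an edge of $G$ must lie on a directed cycle (necessarily inside one side) or on a directed $s_1$–$t_2$ path, which again splits at $w$. I would first show that an orientation of $G$ is almost totally cyclic exactly when each restriction is almost totally cyclic in $G_i$ and one of two conditions holds:
\begin{itemize}
\item[(a)] both restrictions are totally cyclic; or
\item[(b)] both restrictions admit terminal paths in a consistent direction ($s\to t$ in both, or $t\to s$ in both), so that paths concatenate.
\end{itemize}
The key claim is that in an almost totally cyclic orientation that is not totally cyclic, some terminal path exists, and all terminal paths go in one direction. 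Indeed, if paths existed both ways, their union would form a closed directed walk through both terminals, so every edge on such a path would lie on a directed cycle. Iterating this, the orientation would be totally cyclic.

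By reversal symmetry, the $B_i=\alpha_2^*(G_i)-\alpha^*(G_i)$ non-totally-cyclic ones split evenly between the two directions. I must also check that pairing a totally cyclic side with a non-totally-cyclic side fails: the path edges of the non-totally-cyclic side cannot extend through the other side, or do they? A totally cyclic $G_2$ that is strongly connected does contain a $w\to t_2$ path, so such pairs actually succeed. This needs care: I would recount the valid pairs directly as $\alpha_2^*(G_1)\alpha_2^*(G_2)$ minus the misaligned pairs $B_1B_2/2$. The task is then to verify that in every other combination each edge is covered, using strong connectivity of totally cyclic connected sides, which holds because a connected totally cyclic digraph is strongly connected. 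This final case check is the delicate point.
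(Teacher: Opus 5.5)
Your proposal is correct and is the direct counting argument (splitting at the cut vertex $w$, reversal symmetry for $\alpha_2$ and $\alpha_2^*$, strong connectivity of totally cyclic sides) that the paper invokes by deferring to Noble and Royle. For the $\alpha_2^*$ identity, state the characterization as ``both restrictions are almost totally cyclic and they are not both non-totally-cyclic with opposite directions''; the case check you call delicate is then immediate, because in every other pair each terminal path of one side concatenates through $w$ with a path of the other side (a totally cyclic side has paths both ways), while a misaligned pair gives $G$ no terminal path and its non-totally-cyclic side has an edge on no directed cycle.
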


\begin{lemma}[\cite{NR}]\label{lem:parallel-composition}
Let $G_1$ and $G_2$ be connected two-terminal graphs. If $G=G_1\paral G_2$, then
\begin{align*}
&\tau(G)=\tau(G_1)\tau_2(G_2)+\tau_2(G_1)\tau(G_2),\\
&\tau_2(G)=\tau_2(G_1)\tau_2(G_2),\\
&\alpha(G) =\alpha(G_1)\alpha(G_2) -\frac{ \bigl(\alpha(G_1)-\alpha_2(G_1)\bigr) \bigl(\alpha(G_2)-\alpha_2(G_2)\bigr) }{2},\\
&\alpha_2(G)=\alpha_2(G_1)\alpha_2(G_2),\\
&\alpha_2^*(G)=\alpha_2^*(G_1)\alpha_2^*(G_2),\\
&\alpha^*(G) =\alpha_2^*(G_1)\alpha_2^*(G_2) -\frac{ \bigl(\alpha_2^*(G_1)-\alpha^*(G_1)\bigr) \bigl(\alpha_2^*(G_2)-\alpha^*(G_2)\bigr) }{2}.
\end{align*}
\end{lemma}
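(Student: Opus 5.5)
The plan is to verify each of the six identities in Lemma~\ref{lem:parallel-composition} by a direct bijective or counting argument. Write $G=G_1\paral G_2$. Then $V(G_1)\cap V(G_2)=\{s,t\}$, the edge sets are disjoint, and every subgraph or orientation of $G$ splits uniquely into its restrictions to $G_1$ and $G_2$. The basic observation used throughout is that a path in $G$ whose interior avoids $\{s,t\}$ lies entirely in one $G_i$. Hence a directed $s$--$t$ path of $G$ lies in one side, and a directed cycle of $G$ either lies in one side or is the union of an $s\to t$ path in one side and a $t\to s$ path in the other.

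\emph{Spanning trees and forests.} Let $F$ be a spanning tree of $G$ and put $F_i=F\cap E(G_i)$. Every vertex of $G_i$ is joined in $F$ to $s$ or $t$ by a path whose first terminal lies in $G_i$. So each $F_i$ is a spanning forest of $G_i$, and each of its components contains a terminal. Thus $F_i$ is either a spanning tree or a separating spanning $2$-forest of $G_i$. Both $F_i$ cannot be trees, since that would create a cycle through $s$ and $t$. Both cannot be $2$-forests, since then $s$ and $t$ would be disconnected. Conversely, a tree on one side together with a $2$-forest on the other is a spanning tree of $G$. This gives the formula for $\tau(G)$. For $\tau_2(G)$, a separating $2$-forest of $G$ must restrict to separating $2$-forests on both sides, and conversely, which gives $\tau_2(G)=\tau_2(G_1)\tau_2(G_2)$.

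\emph{Acyclic and very acyclic orientations.} By the observation, an orientation of $G$ is acyclic if and only if both restrictions are acyclic and we do not have an $s\to t$ path on one side together with a $t\to s$ path on the other. In an acyclic orientation of $G_i$, at most one direction of terminal path occurs. Reversing all edges is a bijection on acyclic orientations that swaps the two directions. So the $\alpha(G_i)-\alpha_2(G_i)$ acyclic orientations that have a terminal path split equally into $s\to t$ and $t\to s$ types. The number of bad pairs is therefore
\[
2\cdot\frac{\alpha(G_1)-\alpha_2(G_1)}{2}\cdot\frac{\alpha(G_2)-\alpha_2(G_2)}{2},
\]
which gives the formula for $\alpha(G)$. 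An orientation of $G$ is very acyclic if and only if both sides are, because a terminal path lies in one side and any cycle would force a terminal path. This gives $\alpha_2(G)=\alpha_2(G_1)\alpha_2(G_2)$.

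\emph{Almost totally cyclic and totally cyclic orientations.} An orientation of $G$ is almost totally cyclic if and only if both restrictions are. Indeed, a cycle or terminal path of $G$ through an edge of $G_i$ meets $G_i$ either in a cycle or in a terminal path of $G_i$; the converse is immediate. This gives $\alpha_2^*(G)=\alpha_2^*(G_1)\alpha_2^*(G_2)$.

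For $\alpha^*$ I expect the main obstacle, which is the following structural claim. If an orientation of $G_i$ is almost totally cyclic but not totally cyclic, then its terminal paths all go in one direction. To see this, suppose there were both an $s\to t$ path and a $t\to s$ path. Then every edge on a terminal path would lie on a closed directed walk through $s$ and $t$, and hence on a directed cycle, since a closed directed walk decomposes into directed cycles. The orientation would then be totally cyclic, a contradiction. Reversal again splits these $\alpha_2^*(G_i)-\alpha^*(G_i)$ orientations equally into $s\to t$ and $t\to s$ types.

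Now take an almost totally cyclic pair of restrictions. If one side is totally cyclic, it is strongly connected because $G_i$ is connected, so it supplies return paths in both directions, and $G$ is totally cyclic. If neither side is totally cyclic, then $G$ is totally cyclic exactly when the two sides have opposite types. The bad pairs are the $2\cdot\frac{x_1}{2}\cdot\frac{x_2}{2}$ pairs of equal type, where $x_i=\alpha_2^*(G_i)-\alpha^*(G_i)$. This gives the last identity.
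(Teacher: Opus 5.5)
Your argument is correct and is essentially the counting proof that the paper defers to Noble and Royle. You restrict to the two sides, use that a directed cycle of $G$ lies in one side or splits into opposite terminal paths, show that a non-very-acyclic (resp.\ almost-but-not-totally cyclic) orientation has terminal paths in only one direction, and halve these orientations by reversal.
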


For a connected two-terminal graph $G$, let $\widehat G=G\paral K_2$; equivalently, $\widehat G$ is obtained from $G$ by adding a new edge between its terminals. The identities in Lemma \ref{lem:parallel-composition} and \eqref{eq:terminal-r}  give
\begin{align}
&\tau(\widehat G)=\tau(G)+\tau_2(G)=\tau_2(G)(1+r(G)),\qquad &&\tau_2(\widehat G)=\tau_2(G),\label{eq:restore-tree}\\
&\alpha(\widehat G)=\alpha(G)+\alpha_2(G),\qquad &&\alpha_2(\widehat G)=0,\label{eq:restore-a}\\
&\alpha_2^*(\widehat G)=2\alpha_2^*(G),\qquad &&\alpha^*(\widehat G)=\alpha_2^*(G)+\alpha^*(G).\label{eq:restore-b}
\end{align}

We next record two counting formulas for an edge attachment.
\begin{lemma}\label{lem:substitution}
Let $J$ be a graph with $e=xy\in E(J)$, and let $G_e$ be a connected two-terminal graph with terminals $s,t$, vertex-disjoint from $J$. Attach $\widehat{G_{e}}$ to $J$ along $e$ using its added edge $st$, and put $J'=J\cup\widehat{G_{e}}$. Then
\[
\alpha(J')=\alpha(J)\frac{\alpha(G_e)+\alpha_2(G_e)}2,
\qquad
\alpha^*(J')\geq\alpha^*(J)\alpha_2^*(G_e).
\]
\end{lemma}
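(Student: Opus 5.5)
The plan is to handle the two formulas separately. Both are obtained by splitting an orientation of $J'$ into its restriction to $J$ (which contains the edge $xy$) and its restriction to $G_e$ (the part of $\widehat{G_e}$ other than the added edge). These two subgraphs share only the vertices $x,y$ and together contain every edge of $J'$ exactly once.

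For the acyclic count, I would fix an acyclic orientation $\omega$ of $J$; say $xy$ is oriented $x\to y$. A directed cycle of $J'$ either lies in one side, or it crosses between the sides through $x$ and $y$. A crossing cycle uses a directed path inside $G_e$ and a directed path inside $J-e$ joining the terminals. Since $\omega$ already contains the edge $x\to y$, acyclicity of $\omega$ forces every directed $x$--$y$ path in $J-e$ to run from $x$ to $y$. So the extension to $G_e$ is acyclic overall exactly when its orientation $\sigma$ is acyclic and has no directed path from $y$ to $x$. Adding a $y\to x$ path to $\omega$ would indeed close a cycle with the edge $x\to y$, and conversely a cycle formed with paths of $J$ would also give a cycle in $\widehat{G_e}$ oriented with $s\to t$. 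Hence the number of valid $\sigma$ equals the number of acyclic orientations of $\widehat{G_e}$ in which the added edge has a fixed direction. By the symmetry that reverses all edges, this is $\alpha(\widehat G_e)/2=(\alpha(G_e)+\alpha_2(G_e))/2$, using \eqref{eq:restore-a}. This count does not depend on $\omega$, so summing over $\omega$ gives the product formula. I need to check carefully that "no directed cycle in $J'$" is equivalent to "$\omega$ acyclic and $\sigma\cup\{s\to t\}$ acyclic". The direction I must verify is that a cycle crossing between the sides yields a cycle in $\widehat{G_e}$. It does: its $J$-part is a directed path between the terminals, which must go from $x$ to $y$, so it can be replaced by the edge $s\to t$.

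For the totally cyclic inequality, I would exhibit an injection. Take a totally cyclic orientation $\omega$ of $J$ and an almost totally cyclic orientation $\sigma$ of $G_e$, and combine them to orient $J'$. Every edge of $J$ lies on a directed cycle of $\omega$, which is still present in $J'$. The key step is to show every edge $f$ of $G_e$ lies on a directed cycle. If $f$ lies on a directed cycle of $\sigma$, we are done. Otherwise $f$ lies on a directed path of $\sigma$ between the terminals, say from $y$ to $x$, or from $x$ to $y$. In either case I need a directed path in $\omega$ back to the starting terminal. Such a path exists: the edge $xy$ lies on a directed cycle of $\omega$, so $\omega$ contains both a directed $x\to y$ connection and a directed $y\to x$ connection, one being the edge itself and the other the rest of the cycle. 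Concatenating gives a closed walk containing $f$, and such a walk contains a directed cycle through $f$. The map $(\omega,\sigma)\mapsto\omega\cup\sigma$ is injective because the edge sets are disjoint, which gives $\alpha^*(J')\ge\alpha^*(J)\alpha_2^*(G_e)$.

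The main obstacle is the first part: establishing the exact bijection, and especially the equivalence of acyclicity conditions together with the halving by reversal symmetry. The second part is a straightforward injection.
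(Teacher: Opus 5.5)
Your proposal is correct and is essentially the paper's proof. For the acyclic count, you fix an acyclic orientation of $J$, note that a directed cycle crossing between the sides would supply a directed $y$--$x$ path closing a cycle with $e$, and halve $\alpha(\widehat{G_e})=\alpha(G_e)+\alpha_2(G_e)$ by reversal symmetry. For the totally cyclic bound, you pair a totally cyclic orientation of $J$ with an almost totally cyclic orientation on the $G_e$ side and close each terminal path through $J$. The only cosmetic difference is that you count almost totally cyclic orientations of $G_e$ directly, whereas the paper counts those of $\widehat{G_e}$ agreeing with $e$ via $\alpha_2^*(\widehat{G_e})=2\alpha_2^*(G_e)$; these give the same number.
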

\begin{proof}
After identification, we write $x,y$ for the terminals of $G_e$ and $\widehat{G_{e}}$. Fix an acyclic orientation of $J$ and, by interchanging $x$ and $y$ if necessary, suppose that $e$ is directed from $x$ to $y$. By \eqref{eq:restore-a} and reversal symmetry, exactly
\[
\frac{\alpha(G_e)+\alpha_2(G_e)}2
\]
acyclic orientations of $\widehat{G_{e}}$ agree with this direction of $e$. Each such combines with the fixed orientation of $J$ to give an acyclic orientation of $J'$. Indeed, any directed cycle using edges from both graphs $J$ and $\widehat{G_{e}}$  would contain a directed path from $y$ to $x$ in one of them. Together with $e$, this path would form a directed cycle in that graph, a contradiction. Conversely, every acyclic orientation of $J'$ restricts to acyclic orientations of
$J$ and $\widehat{G_{e}}$ agreeing on $e$. Hence
\[
\alpha(J')=\alpha(J)\frac{\alpha(G_e)+\alpha_2(G_e)}2.
\]

Next fix a totally cyclic orientation of $J$ with $e$ directed from $x$ to $y$. By \eqref{eq:restore-b} and reversal symmetry, exactly $\alpha_2^*(G_e)$ almost totally cyclic orientations of $\widehat{G_{e}}$ agree with this direction of $e$. Since $e$ lies on a directed cycle in $J$, deleting $e$ from that cycle gives a directed path from $y$ to $x$.
The edge $e$ itself gives a directed path from $x$ to $y$. Thus every directed path in $\widehat{G_{e}}$ from $x$ to $y$
or from $y$ to $x$ can be completed to a directed cycle using a directed path in $J$ in the opposite direction. Every edge of $\widehat{G_{e}}$ lies on a directed cycle or on such a path, while every edge of $J$ already lies on a directed cycle. Hence the combined orientation of $J'$ is totally cyclic. For each fixed orientation of $J$, these choices give
$\alpha_2^*(G_e)$ distinct totally cyclic orientations of $J'$. Since $J$ has $\alpha^*(J)$ totally cyclic orientations,
we obtain
\[
\alpha^*(J')\geq\alpha^*(J)\alpha_2^*(G_e).\qedhere
\]
\end{proof}

\section{\texorpdfstring{Spanning tree counts and comparisons}{Spanning tree counts and comparisons}}\label{sec:networks}

\begin{lemma}\label{lem:two-complete-tree}
If $J_0=K_p\cup K_q$, where $V(K_p)\cap V(K_q)=S$ and $|S|=a$ with $2\leq a\leq\min\{p,q\}$, then
\begin{equation}\label{eq:clique-tree-ratio}
\tau(J_0) =ap^{p-a-1}q^{q-a-1}(p+q-a)^{a-1}.
\end{equation}
\end{lemma}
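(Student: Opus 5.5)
We compute $\tau(J_0)$ with the matrix-tree theorem, using the fact that the Laplacian of $J_0$ has a very explicit block structure. Order the vertices as $A=V(K_p)\setminus S$ (with $p-a$ vertices), then $S$ ($a$ vertices), then $B=V(K_q)\setminus S$ ($q-a$ vertices). A vertex of $A$ has degree $p-1$, a vertex of $S$ has degree $p+q-a-1$, and a vertex of $B$ has degree $q-1$. There are no edges between $A$ and $B$. By the matrix-tree theorem, $\tau(J_0)=\frac1n\prod_{i\ge2}\mu_i$, where $n=p+q-a$ and the $\mu_i$ are the nonzero Laplacian eigenvalues. So it suffices to find the Laplacian spectrum.

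We exhibit eigenvectors in four families.
\begin{itemize}
\item[(i)] Vectors supported on $A$ with coordinate sum zero, $p-a-1$ of them independent. Here $Lz=pz$, since on $A$ the restriction of $L$ is $pI-\mathbf{J}$, and the row sums into $S$ vanish.
\item[(ii)] Similarly, vectors supported on $B$ with coordinate sum zero give eigenvalue $q$ with multiplicity $q-a-1$.
\item[(iii)] Vectors supported on $S$ with coordinate sum zero give eigenvalue $n=p+q-a$ with multiplicity $a-1$, since the restriction of $L$ to $S$ is $nI-\mathbf{J}$ and $S$ is complete to both $A$ and $B$.
\item[(iv)] The remaining $3$-dimensional space consists of vectors constant on each of $A$, $S$ and $B$. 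On this space $L$ acts as the weighted Laplacian of the quotient, a path $A$--$S$--$B$.
\end{itemize}

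For the quotient in (iv), write the vector values as $(x,y,w)$. The equations are
\[
(Lz)_A=a(x-y),\qquad (Lz)_S=(p-a)(y-x)+(q-a)(y-w),\qquad (Lz)_B=a(w-y).
\]
This matrix has eigenvalue $0$ on the constant vector. Its trace is $a+(p-a)+(q-a)+a=p+q$. The two nonzero eigenvalues are found by testing explicit vectors. The vector $x=(q-a)$, $y=0$, $w=-(p-a)$ gives the eigenvalue $a$. The remaining eigenvalue is then $p+q-a=n$, which can be checked directly on the vector $x=w=1$, $y=-(p+q-2a)/a$; this is orthogonal to the constant vector in the weighted sense. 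Hence the full nonzero spectrum is
\[
p^{\,p-a-1},\quad q^{\,q-a-1},\quad n^{\,a-1},\quad a,\quad n,
\]
where exponents denote multiplicities.

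The multiplicities total $(p-a-1)+(q-a-1)+(a-1)+2=n-1$, as required. Substituting into the matrix-tree formula gives
\[
\tau(J_0)=\frac1n\,p^{p-a-1}q^{q-a-1}n^{a-1}\,a\,n=ap^{p-a-1}q^{q-a-1}(p+q-a)^{a-1}.
\]
The edge cases $a=p$ or $a=q$ need a check. If $a=p$, family (i) is empty, and the formal factor $p^{-1}$ must be reconciled: the $A$-part of the quotient disappears, leaving the two-class quotient $S$--$B$, whose nonzero eigenvalue is $n$. Redoing the count in that case gives $\tau(K_q)=q^{q-2}$, which matches the formula with $p=a$. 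The case $a=q$ is symmetric.

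The main obstacle is the eigenvalue bookkeeping in (iv): checking that the two nonzero quotient eigenvalues are exactly $a$ and $n$, together with the degenerate cases $a\in\{p,q\}$. Everything else follows from the standard $K_m$ Laplacian structure. As a sanity check, $p=q=a$ gives $a\cdot a^{-1}a^{-1}a^{a-1}=a^{a-2}$, which is Cayley's formula.
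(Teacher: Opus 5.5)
Your proof is correct, but it takes a different route from the paper. For $a<\min\{p,q\}$, the paper observes that $J_0$ is the clique extension of a three-vertex path with parts of orders $p-a$, $a$, $q-a$, and quotes Zhou and Bu's closed formula for spanning trees of such graphs. For $a\in\{p,q\}$ it notes that $J_0$ is a single complete graph and applies Cayley's formula.

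You instead compute the full Laplacian spectrum yourself:
\begin{itemize}
\item the sum-zero vectors on $A$, $B$ and $S$ give the eigenvalues $p$, $q$ and $n$, with multiplicities $p-a-1$, $q-a-1$ and $a-1$;
\item the equitable-partition quotient on $A$--$S$--$B$ contributes $0$, $a$ and $n$;
\item these subspaces are mutually orthogonal and have total dimension $n$, so the spectrum is complete, and the eigenvalue form of the matrix-tree theorem gives the formula.
\end{itemize}
I checked the quotient eigenvectors and they are right. For instance, $(q-a,0,-(p-a))$ is annihilated in the $S$-coordinate because $(p-a)(q-a)$ cancels.

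Your argument is self-contained and in effect reproves the special case of the cited result that is needed. The paper's version is shorter but depends on the external reference.

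In the degenerate case $a=p$, you can simply say, as the paper does, that $V(K_p)=S\subseteq V(K_q)$, so $J_0=K_q$ and $\tau(J_0)=q^{q-2}$. This agrees with the right-hand side because $a\,p^{p-a-1}=1$ when $a=p$. Your remark about the two-class quotient is consistent with this but is not needed.
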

\begin{proof}
Suppose first that $a<\min\{p,q\}$. The sets $V(K_p)\setminus S$, $S$ and $V(K_q)\setminus S$ induce complete graphs of orders $p-a$, $a$ and $q-a$, respectively. Any two vertices in consecutive parts are adjacent, and there are no edges between the first and third parts. Thus $J_0$ is a clique extension of a three-vertex path.  Zhou and Bu \cite[Example~6.26]{ZhouBu} give
\[
\begin{aligned}
\tau(J_0) = \frac{p^{p-a-1}}{p-a} \frac{(p+q-a)^{a-1}}{a} \frac{q^{q-a-1}}{q-a} \bigl((p-a)a\bigr)\bigl(a(q-a)\bigr)=ap^{p-a-1}q^{q-a-1}(p+q-a)^{a-1}.
\end{aligned}
\]
The cases $p=a$ or $q=a$ follow from Cayley's formula.
\end{proof}

Chen and Xie~\cite[Lemma~2.5]{ChenXie} proved a related comparison for the addition of a single vertex. The following consequence of Lemma~\ref{lem:r-monotonicity} bounds the spanning tree ratio by replacing the original graph with a complete subgraph containing all shared vertices.

\begin{lemma}\label{lem:attachment-tree-comparison}
Let $H$ be a connected simple graph containing a complete subgraph $K_p$. Suppose that $J$ is a connected graph such that $V(H)\cap V(J)=S\neq\emptyset$ and $S\subseteq V(K_p)$. Put $G=H\cup J$ and $G_p=K_p\cup J$. Then
\begin{equation}\label{eq:attachment-tree-comparison}
\frac{\tau(G)}{\tau(H)}\leq\frac{\tau(G_p)}{\tau(K_p)}.
\end{equation}
\end{lemma}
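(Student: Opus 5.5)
We need to prove $\tau(H\cup J)/\tau(H)\le \tau(K_p\cup J)/\tau(K_p)$. The plan is to express the ratio $\tau(H\cup J)/\tau(H)$ as a sum over how spanning forests of $J$ connect the shared set $S$, weighted by quantities that depend only on $H$ and are monotone under adding edges. We then compare $H$ with $K_p$, which has the same vertex set $S$ of attachment.

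\textbf{Step 1 (decomposition).} Every spanning tree of $G=H\cup J$ restricts to a spanning forest $F_J$ of $J$ in which every component meets $S$, and to a spanning forest $F_H$ of $H$ in which every component meets $S$. Let $\pi$ be the partition of $S$ induced by the components of $F_J$. Then $F_H$ must be a spanning forest of $H$ that becomes a tree once the blocks of $\pi$ are contracted. If an edge lies in both $H$ and $J$, assign it to $J$ only; that is, treat $G$ as $H'\cup J$ with $H'=H-E(J)$. I would first reduce to the edge-disjoint case, which is harmless after checking the argument below. This gives
\[
\tau(G)=\sum_{\pi} f_J(\pi)\,\tau(H/\pi),
\]
where $f_J(\pi)$ counts the forests of $J$ rooted on $S$ with partition $\pi$, and $H/\pi$ denotes $H$ with each block of $\pi$ identified (loops deleted, parallel edges kept). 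The same formula holds with $K_p$ in place of $H$.

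\textbf{Step 2 (monotonicity).} It then suffices to show, for each partition $\pi$ of $S$,
\[
\frac{\tau(H/\pi)}{\tau(H)}\le\frac{\tau(K_p/\pi)}{\tau(K_p)}.
\]
For a single pair of vertices, identifying $s$ and $t$ gives $\tau(H/\{s,t\})/\tau(H)=1/r(H)$. Since $K_p\subseteq H$ with the same terminals, Lemma~\ref{lem:r-monotonicity} gives $r(H)\ge r(K_p)$, which is exactly the desired inequality. For a general $\pi$, write the identification as a sequence of pairwise merges:
\[
\frac{\tau(H/\pi)}{\tau(H)}=\prod_i \frac{\tau(H_i)}{\tau(H_{i-1})},
\]
where $H_0=H$ and each step merges two vertices of $S$ lying in blocks that are still distinct. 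Each factor equals $1/r(H_{i-1})$ with respect to those two vertices. At every stage $K_p/\pi_{i-1}$ remains a subgraph of $H_{i-1}$: a multigraph containing the images of the vertices of $K_p$, with merged vertices of $S$ and parallel edges. Applying Lemma~\ref{lem:r-monotonicity} factor by factor then yields the inequality. Rayleigh monotonicity for multigraphs follows from the variational formula \eqref{eq:r-variational}, whose proof is unchanged.

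\textbf{Step 3 (conclusion).} Sum the inequalities from Step 2 against the nonnegative weights $f_J(\pi)$. For edges shared between $H$ and $J$, note that $S\subseteq V(K_p)$ and $H$ is simple, so any shared edge lies in $K_p$. Removing these edges from $J$ and keeping them in $H$ and $K_p$ gives the same $G$ and $G_p$. The only caveat is that $J$ minus these edges may become disconnected, but the decomposition in Step 1 only requires forests rooted on $S$, so connectivity of $J$ is not needed.

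\textbf{Main obstacle.} The main difficulty is Step 2 for a general partition. We must verify carefully that at each intermediate stage the contracted $K_p$ is a subgraph of the contracted $H$ with the same pair of terminals. We also need the intermediate graphs to stay connected, so that Lemma~\ref{lem:r-monotonicity} applies. The factor $1/r$ is justified by \eqref{spanning_forest_number} applied to the intermediate multigraph.
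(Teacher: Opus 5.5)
Your proof is correct, but it takes a different route from the paper's.

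\textbf{The paper's route.} The paper never expands $\tau(G)$ over partitions of $S$. It builds $H$ from $K_p$ in two stages. First it adds the vertices of $V(H)\setminus V(K_p)$ as pendant vertices, which changes neither $\tau(X)$ nor $\tau(X\cup J)$. Then it adds the remaining edges $e=xy$ of $H$ one at a time, using
\[
\frac{\tau((X+e)\cup J)}{\tau(X+e)}=\frac{\tau(X\cup J)}{\tau(X)}\cdot\frac{1+1/r(X\cup J)}{1+1/r(X)}\le\frac{\tau(X\cup J)}{\tau(X)}.
\]
Here Rayleigh monotonicity is applied to $X\subseteq X\cup J$, with terminals the ends of the new edge.

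\textbf{Your route.} You apply monotonicity the other way round, to $K_p/\pi'\subseteq H/\pi'$ with terminals in $S$, after isolating all dependence on $J$ in the nonnegative weights $f_J(\pi)$. The decomposition $\tau(G)=\sum_\pi f_J(\pi)\tau(H/\pi)$ is valid. Every component of $T\cap E(J)$ meets $S$, because $S\neq\emptyset$ and vertices of $V(J)\setminus S$ are incident only to edges of $J$. Contracting these components turns $H$ into $H/\pi$.

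The pairwise-merge telescoping also works. Identifications preserve connectivity, and they send $K_p$ to a sub-multigraph of the image of $H$ with the same two terminals. So the points you list as the main obstacle are immediate.

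\textbf{Comparison.} The paper's argument is shorter and needs no multigraph contractions. Yours proves the stronger termwise inequality $\tau(H/\pi)/\tau(H)\le\tau(K_p/\pi)/\tau(K_p)$ for every partition $\pi$ of $S$. This is a multi-terminal monotonicity statement about $H$ versus $K_p$ alone, so the comparison holds uniformly in $J$ and visibly does not use connectivity of $J$.

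\textbf{One correction.} Drop the reduction stated in Step~1 that assigns shared edges to $J$, that is, replaces $H$ by $H'=H-E(J)$. With that choice the termwise inequality you would need is $\tau(H'/\pi)/\tau(H)\le\tau(K_p'/\pi)/\tau(K_p)$, and this fails in general. Take the discrete partition and one shared edge $e$. Then $\tau(H-e)/\tau(H)=r(H-e)/(1+r(H-e))$, and monotonicity makes this \emph{at least} $\tau(K_p-e)/\tau(K_p)$. For example, $H=K_4$ and $K_p=K_3$ give $1/2>1/3$.

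The reduction in Step~3 is the right one: remove the shared edges from $J$ and keep them in $H$ and $K_p$. This uses simplicity of $H$ to place every shared edge in $K_p$, and it is all you need.
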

\begin{proof}
Starting with $K_p$, add the vertices of $V(H)\setminus V(K_p)$ one at a time, each together with one edge of $H$ joining it to a vertex of $K_p$ or to a vertex added earlier. This is possible since $H$ is connected. Then add all remaining edges of $H$. Perform the same additions starting with $G_p=K_p\cup J$. Since $V(H)\cap V(J)\subseteq V(K_p)$, each new vertex lies outside $V(J)$. When added, it has degree one in the graph constructed from $K_p$ and in the graph constructed from $G_p$. Adding a vertex of degree one does not change the number of spanning trees, since every spanning tree must contain its unique incident edge.

When a remaining edge of $H$ is added, let $X$ be the current subgraph of $H$, put $Y=X\cup J$, and let $e=xy$ be the edge to be added at this step. Take $x,y$ as the terminals of $X$ and $Y$. Since $Y$ contains $X$, Lemma \ref{lem:r-monotonicity} gives $r(Y)\geq r(X)>0$. By \eqref{eq:restore-tree} and \eqref{eq:terminal-r},
\begin{equation}\label{eq:attachment-ratio-step}
\begin{aligned}
\frac{\tau(Y+e)}{\tau(X+e)}=\frac{\tau(Y)+\tau_2(Y)}{\tau(X)+\tau_2(X)}=\frac{\tau(Y)}{\tau(X)}
  \frac{1+1/r(Y)}{1+1/r(X)}\leq\frac{\tau(Y)}{\tau(X)}.
\end{aligned}
\end{equation}
Thus the ratio of the spanning tree counts is unchanged during the vertex additions and cannot increase during the edge additions. The two constructions start with $K_p$ and $G_p$ and end with $H$ and $G$, respectively. Therefore
\[
\frac{\tau(G)}{\tau(H)}\leq\frac{\tau(G_p)}{\tau(K_p)}.\qedhere
\]
\end{proof}

\begin{lemma}\label{lem:clique-response}
For $p\geq3$, let $s,t$ be the terminals of $K_p-st$. Then $r(K_p-st)=\frac{p-2}{2}$.
\end{lemma}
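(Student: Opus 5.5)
We need $r(K_p-st)=\tau(K_p-st)/\tau_2(K_p-st)$, and I will compute both counts; the electrical interpretation gives an independent check. Since $K_p = \widehat{K_p-st}$, equation~\eqref{eq:restore-tree} gives
\[
\tau(K_p)=\tau(K_p-st)+\tau_2(K_p-st).
\]
By Cayley's formula, $\tau(K_p)=p^{p-2}$. Each edge of $K_p$ lies in the same number of spanning trees, namely $(p-1)p^{p-2}/\binom p2=2p^{p-3}$. The spanning trees of $K_p$ avoiding $st$ are exactly the spanning trees of $K_p-st$, so
\[
\tau(K_p-st)=p^{p-2}-2p^{p-3}=(p-2)p^{p-3}.
\]
It follows that $\tau_2(K_p-st)=2p^{p-3}$, and therefore
\[
r(K_p-st)=\frac{(p-2)p^{p-3}}{2p^{p-3}}=\frac{p-2}{2}.
\]

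As an alternative route, or a sanity check, use the variational formula~\eqref{eq:r-variational} with symmetry. Every permutation fixing $s$ and $t$ preserves $K_p-st$, so the minimizer (unique, since the energy is strictly convex on the admissible affine space given connectivity) is constant on the $p-2$ other vertices. Call this constant $c$. The energy is then
\[
(p-2)\bigl((1-c)^2+c^2\bigr),
\]
which is minimized at $c=1/2$ with value $(p-2)/2$. Both routes agree, which also confirms the edge-symmetry count.

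The only step needing care is the edge-transitivity count of the spanning trees of $K_p$ containing $st$. It is routine: each spanning tree has $p-1$ edges and all $\binom p2$ edges are equivalent under the symmetry of $K_p$. The argument needs $p\ge3$, which is exactly what guarantees that $K_p-st$ is connected, so that $\tau_2>0$ and $r$ is defined.
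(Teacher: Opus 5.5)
Your proof is correct, but your main argument differs from the paper's. The paper works only with the Dirichlet principle \eqref{eq:r-variational}. For an arbitrary admissible $z$ it rewrites $(1-z_v)^2+z_v^2=\tfrac12+2\bigl(z_v-\tfrac12\bigr)^2$ and discards the nonnegative contribution of $K_p[U]$. This gives the lower bound $(p-2)/2$ for every admissible $z$ at once, with equality at $z_v=\tfrac12$.

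You instead compute the two counts directly. Cayley's formula and edge-transitivity of $K_p$ give $\tau(K_p-st)=(p-2)p^{p-3}$, and \eqref{eq:restore-tree} then gives $\tau_2(K_p-st)=2p^{p-3}$. This uses only the definition $r=\tau/\tau_2$, so it avoids the externally cited electrical-network interpretation, and it yields the individual counts as a by-product. The paper's route avoids any enumeration and exhibits the optimal potential explicitly.

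Your secondary variational check is closer to the paper's argument, but it reaches the minimizer through uniqueness (strict convexity) and invariance under the permutations fixing $s$ and $t$. The paper's completed-square identity needs neither a uniqueness nor a symmetry-averaging step.
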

\begin{proof}
Put $U=V(K_p)\setminus\{s,t\}$, so $|U|=p-2$. Let $z:V(K_p)\to\mathbb R$ satisfy $z_s=1$ and $z_t=0$. The edges of $K_p-st$ are $sv,tv$ for $v\in U$, together with the edges of $K_p[U]$. Therefore
\[
\begin{aligned}
\sum_{xy\in E(K_p-st)}(z_x-z_y)^2
&=\sum_{v\in U}\bigl((z_s-z_v)^2+(z_t-z_v)^2\bigr)
  +\sum_{xy\in E(K_p[U])}(z_x-z_y)^2\\
&=\sum_{v\in U}\bigl((1-z_v)^2+z_v^2\bigr)
  +\sum_{xy\in E(K_p[U])}(z_x-z_y)^2\\
&=\frac{|U|}{2}
  +2\sum_{v\in U}\left(z_v-\frac12\right)^2
  +\sum_{xy\in E(K_p[U])}(z_x-z_y)^2\\
&\geq\frac{|U|}{2}=\frac{p-2}{2}.
\end{aligned}
\]
Equality holds when $z_v=1/2$ for every $v\in U$. Taking the minimum in \eqref{eq:r-variational} proves the result.
\end{proof}

\section{\texorpdfstring{Rooted $2$-trees and edge decorations}{Rooted 2-trees and edge decorations}}\label{sec:interface}
Let $T$ be a nontrivial rooted $2$-tree with root edge $st$, and put $G=T-st$, with terminals $s,t$. In the present section, we will prove that $G$ is connected and satisfies
\begin{equation}\label{eq:interface}
\bigl(\alpha(G)+\alpha_2(G)\bigr)\alpha_2^*(G) \geq2\tau_2(G)^2\exp\!\left(\frac{4r(G)}{5}\right).
\end{equation}
We prove \eqref{eq:interface} by strong induction on the order of $T$ and use it to derive a sufficient condition under which edge decorations preserve the multiplicative Merino--Welsh inequality. We also recall the known result for nontrivial $2$-trees.

\begin{figure}[htbp]
\centering
\begin{tikzpicture}[x=1cm,y=1cm,
 dot/.style={circle,fill,inner sep=1.7pt},
 box/.style={draw,rounded corners,fill=white,inner sep=5pt}]
 \node[dot,label=left:$s$] (s) at (0,0) {};
 \node[dot,label=above:$w$] (w) at (3,1.2) {};
 \node[dot,label=right:$t$] (t) at (6,0) {};
 \draw (s)--(w)--(t);
 \node[box] at (1.5,.6) {$T_1$};
 \node[box] at (4.5,.6) {$T_2$};
 \draw[dashed] (s)--node[below] {root edge $st$ (deleted)}(t);
 \draw (s) to[bend right=32] node[box,below=3pt] {other branches, if any} (t);
\end{tikzpicture}
\caption{A branch $T_1\ser T_2$ of $T-st$. Only the root edges of $T_1$ and $T_2$ are shown. The lower arc represents the other branches, if any; the
dashed line indicates the deleted edge $st$.} \label{fig:branch}
\end{figure}

Fix a construction of $T$ starting from $st$, as explained after Definition~\ref{def:2tree}. Each new vertex $w$ added on the edge $st$ is joined to both $s$ and $t$. We start a branch of $T-st$ with the path consisting of the edges $sw$ and $wt$. Whenever a new vertex $v$ is added on an existing edge $xy\neq st$, assign $v$ and its two new edges to the branch containing $xy$. Within the branch corresponding to $w$, the constructions starting from $sw$ and $wt$ yield rooted $2$-trees $T_1$ and $T_2$, respectively; either may be $K_2$. By construction, $V(T_1)\cap V(T_2)=\{w\}$, and there is no edge of $T-st$ between  $V(T_1)\setminus\{w\}$ and $V(T_2)\setminus\{w\}$. With $(s,w)$ and $(w,t)$ as the terminal pairs of $T_1$ and $T_2$, respectively, this branch is their series connection $T_1\ser T_2$, with terminals $s,t$.

Distinct branches meet only at $s,t$ and have no edges between their nonterminal vertex sets. Hence $G$ is a nonempty parallel connection of branches, and is connected. For each branch, $t\notin V(T_1)$ and $s\notin V(T_2)$, so $|V(T_i)|<|V(T)|$ for $i=1,2$. Figure~\ref{fig:branch} illustrates this decomposition.

\begin{lemma}\label{lem:parallel-interface}
Let $G_1$ and $G_2$ be connected two-terminal graphs satisfying \eqref{eq:interface}. Then $G_1\paral G_2$ also satisfies \eqref{eq:interface}.
\end{lemma}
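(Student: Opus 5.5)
Let $G=G_1\paral G_2$. I will write every quantity in the inequality \eqref{eq:interface} for $G$ using Lemma~\ref{lem:parallel-composition}. By that lemma, $\tau_2(G)=\tau_2(G_1)\tau_2(G_2)$ and $\tau(G)=\tau(G_1)\tau_2(G_2)+\tau_2(G_1)\tau(G_2)$. Dividing the second by the first gives
\[
r(G)=r(G_1)+r(G_2).
\]
This is the usual additivity of conductance under parallel connection. It also means $\exp(4r(G)/5)=\exp(4r(G_1)/5)\exp(4r(G_2)/5)$. Together with $\tau_2(G)^2=\tau_2(G_1)^2\tau_2(G_2)^2$, the right-hand side of \eqref{eq:interface} for $G$ equals one half of the product of the right-hand sides for $G_1$ and $G_2$.

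On the left-hand side, Lemma~\ref{lem:parallel-composition} gives $\alpha_2^*(G)=\alpha_2^*(G_1)\alpha_2^*(G_2)$. For the acyclic part, write $a_i=\alpha(G_i)$ and $b_i=\alpha_2(G_i)$. Then
\[
\alpha(G)+\alpha_2(G)=a_1a_2-\tfrac12(a_1-b_1)(a_2-b_2)+b_1b_2
=\tfrac12\bigl(a_1a_2+a_1b_2+b_1a_2+b_1b_2\bigr)
=\tfrac12(a_1+b_1)(a_2+b_2).
\]
So the left-hand side for $G$ is exactly
\[
\tfrac12\bigl(\alpha(G_1)+\alpha_2(G_1)\bigr)\alpha_2^*(G_1)\cdot\bigl(\alpha(G_2)+\alpha_2(G_2)\bigr)\alpha_2^*(G_2).
\]

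Each factor is nonnegative, so multiplying the two hypotheses gives
\[
\text{LHS}(G)\geq \tfrac12\cdot 2\tau_2(G_1)^2e^{4r(G_1)/5}\cdot 2\tau_2(G_2)^2e^{4r(G_2)/5}=2\tau_2(G)^2e^{4r(G)/5},
\]
which is \eqref{eq:interface} for $G$. The argument also needs $G$ to be connected, which holds because $G_1$ and $G_2$ are connected and share their terminals.

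The only step needing care is the factorization identity $\alpha(G)+\alpha_2(G)=\frac12(\alpha(G_1)+\alpha_2(G_1))(\alpha(G_2)+\alpha_2(G_2))$. It is a direct expansion, and once it holds the proof is a formal multiplication with no real obstacle. The factor $\frac12$ there cancels exactly against the factor $2$ on the right-hand side, which explains why the constant $2$ appears in \eqref{eq:interface}.
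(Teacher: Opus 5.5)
Your proposal is correct and follows the paper's proof exactly: both derive $\alpha(G)+\alpha_2(G)=\tfrac12\bigl(\alpha(G_1)+\alpha_2(G_1)\bigr)\bigl(\alpha(G_2)+\alpha_2(G_2)\bigr)$, $\alpha_2^*(G)=\alpha_2^*(G_1)\alpha_2^*(G_2)$, $\tau_2(G)=\tau_2(G_1)\tau_2(G_2)$ and $r(G)=r(G_1)+r(G_2)$ from Lemma~\ref{lem:parallel-composition}, and then multiply the two hypotheses. Your explicit expansion of the acyclic identity and your note on nonnegativity fill in details the paper leaves implicit.
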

\begin{proof}
Put $G=G_1\paral G_2$. Lemma~\ref{lem:parallel-composition} and \eqref{eq:terminal-r} give
\[
\alpha(G)+\alpha_2(G) =\frac{\bigl(\alpha(G_1)+\alpha_2(G_1)\bigr) \bigl(\alpha(G_2)+\alpha_2(G_2)\bigr)}2,
\]
and
\[
\alpha_2^*(G)=\alpha_2^*(G_1)\alpha_2^*(G_2),\qquad \tau_2(G)=\tau_2(G_1)\tau_2(G_2),\qquad r(G)=r(G_1)+r(G_2).
\]
Substituting these identities and using \eqref{eq:interface} for both $G_1$ and $G_2$, we obtain \eqref{eq:interface} for $G$.
\end{proof}

\begin{lemma}\label{lem:branch}
For $i=1,2$, let $T_i$ be a rooted $2$-tree with root edge $e_i$ and take the ends of $e_i$ as its terminals. For each nontrivial $T_i$, put $G_i=T_i-e_i$ and assume that $G_i$ satisfies \eqref{eq:interface}. Then $T_1\ser T_2$ also satisfies \eqref{eq:interface}.
\end{lemma}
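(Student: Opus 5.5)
The plan is to compute the parameter vector of $B=T_1\ser T_2$ from those of $T_1,T_2$ via Lemma~\ref{lem:series-composition}. I then bound everything by the hypotheses \eqref{eq:interface} for $G_i$, and reduce the claim to a one-variable real inequality.

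\textbf{Step 1: a uniform description of each $T_i$.} If $T_i$ is nontrivial, then $T_i=\widehat{G_i}$, so \eqref{eq:restore-tree}--\eqref{eq:restore-b} give
\[
\tau_2(T_i)=\tau_2(G_i),\quad \rho_i:=\frac{\tau(T_i)}{\tau_2(T_i)}=1+r(G_i),\quad \alpha_2(T_i)=0,\quad a_i:=\alpha(T_i)=\alpha(G_i)+\alpha_2(G_i),
\]
\[
b_i:=\alpha_2^*(T_i)=2\alpha_2^*(G_i),\qquad c_i:=\alpha^*(T_i)=\alpha_2^*(G_i)+\alpha^*(G_i).
\]
By \eqref{eq:interface} for $G_i$ we get $a_ib_i\geq 4\tau_2(T_i)^2e^{4(\rho_i-1)/5}$. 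We also have $b_i/2\le c_i\le b_i$, where the upper bound holds because totally cyclic orientations are almost totally cyclic.

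The graph $G_i$ contains the path $s\,w\,t$ of the first triangle, so Lemma~\ref{lem:r-monotonicity} and $r(P_3)=1/2$ give $\rho_i\ge 3/2$.

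If $T_i=K_2$, then $\mathbf v(K_2)$ gives $\tau_2=1$, $\rho_i=1$, $\alpha_2=0$, $a_i=2$, $b_i=2$ and $c_i=0$. The same bound $a_ib_i\ge4\tau_2^2e^{4(\rho_i-1)/5}$ then holds with equality.

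\textbf{Step 2: series composition.} Put $f_i=\tau_2(T_i)$. Lemma~\ref{lem:series-composition} with $\alpha_2(T_i)=0$ gives $\alpha(B)=a_1a_2$ and $\alpha_2(B)=a_1a_2/2$, so $\alpha(B)+\alpha_2(B)=\tfrac32a_1a_2$. It also gives
\[
\alpha_2^*(B)=b_1b_2\kappa,\qquad \kappa=1-\tfrac12(1-\beta_1)(1-\beta_2),\qquad \beta_i=c_i/b_i\in[0,1].
\]
Hence $\kappa=1/2$ if both $T_i$ are trivial, $\kappa\ge3/4$ if exactly one is, and $\kappa\ge7/8$ if neither is.

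For the trees, $\tau_2(B)=f_1f_2(\rho_1+\rho_2)$ and $r(B)=\rho_1\rho_2/(\rho_1+\rho_2)\le s/4$, where $s=\rho_1+\rho_2$. Combining these with Step 1,
\[
\bigl(\alpha(B)+\alpha_2(B)\bigr)\alpha_2^*(B)\ \ge\ 24\kappa f_1^2f_2^2e^{4(s-2)/5}.
\]
Therefore it suffices to prove $24\kappa e^{4(s-2)/5}\ge 2s^2e^{s/5}$, that is,
\[
12\kappa\, e^{(3s-8)/5}\ge s^2 .
\]

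\textbf{Step 3: the real inequality, which is the main obstacle since the constants are tight.}

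If both $T_i$ are trivial, then $s=2$ and $\kappa=1/2$. The inequality reads $6e^{-2/5}\approx4.02\ge4$, which holds but only barely. This case is what forces the constant $4/5$.

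Otherwise, $g(s)=\ln(12\kappa)+(3s-8)/5-2\ln s$ is convex, with its minimum at $s=10/3$. At that point $g\ge0$ amounts to $12\kappa e^{2/5}\ge100/9$, which holds whenever $\kappa\ge0.62$. This covers both remaining cases, since there $\kappa\ge3/4$.

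One point needs care in writing this out: in the mixed and nontrivial cases we must use the constraint $\rho_i\ge 3/2$ from Step 1 only to guarantee $\kappa\ge 3/4$, not for the range of $s$. The convexity argument then handles all $s>0$ at once.
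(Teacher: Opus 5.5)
Your proposal is correct and follows the paper's proof essentially step for step. It uses the same factors $\tfrac32$ and $\kappa\ge\tfrac34$, the same bound $r(B)\le s/4$, and the same scalar inequality $9e^{(3s-8)/5}\ge s^2$; the paper proves this via $\ln x\le x-1$ at $x=3s/10$, which is equivalent to your convexity argument with minimum at $s=10/3$, and it checks the $P_3$ case directly.

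Two small slips do not affect validity. First, the exact threshold is $\kappa\ge 100/(108e^{2/5})\approx0.6207$, so ``$\kappa\ge0.62$'' is marginally insufficient, though $\kappa\ge\tfrac34$ is ample. Second, $\kappa\ge\tfrac34$ comes from $c_i\ge b_i/2$, not from $\rho_i\ge\tfrac32$, and the bound $\rho_i\ge\tfrac32$ is never actually needed.
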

\begin{proof}
Put $H=T_1\ser T_2$. Since $T_1$ and $T_2$ are connected, so is $H$. If $T_1=T_2=K_2$, then $H=P_3$, and its parameter vector gives
\[
\bigl(\alpha(H)+\alpha_2(H)\bigr)\alpha_2^*(H)=12,\qquad \tau_2(H)=2,\qquad r(H)=\frac12.
\]
Thus \eqref{eq:interface} follows from the direct calculation $12>8\exp(2/5)$. Hence assume that at least one of $T_1$ and $T_2$ is nontrivial.

Fix $i\in\{1,2\}$. Suppose first that $T_i$ is nontrivial. Since $G_i=T_i-e_i$, restoring the root edge gives
$T_i=\widehat G_i$. By \eqref{eq:restore-tree}--\eqref{eq:restore-b} and \eqref{eq:terminal-r}, we have
\[
\begin{gathered}
\alpha(T_i)=\alpha(G_i)+\alpha_2(G_i),
\qquad
\alpha_2^*(T_i)=2\alpha_2^*(G_i),\\
\tau_2(T_i)=\tau_2(G_i),
\qquad
r(T_i)=1+r(G_i).
\end{gathered}
\]
Since $G_i$ satisfies \eqref{eq:interface} by hypothesis, we obtain
\[
\begin{aligned}
\alpha(T_i)\alpha_2^*(T_i)
=2\bigl(\alpha(G_i)+\alpha_2(G_i)\bigr)\alpha_2^*(G_i)\geq4\tau_2(G_i)^2
  \exp\!\left(\frac{4r(G_i)}5\right)=4\tau_2(T_i)^2
  \exp\!\left(\frac{4(r(T_i)-1)}5\right).
\end{aligned}
\]
If $T_i=K_2$, then $\alpha(T_i)=\alpha_2^*(T_i)=2$ and $\tau_2(T_i)=r(T_i)=1$. Thus the leftmost and rightmost expressions above both equal $4$. Consequently, for $i=1,2$,
\begin{equation}\label{eq:branch-factor}
\alpha(T_i)\alpha_2^*(T_i)
\geq4\tau_2(T_i)^2
\exp\!\left(\frac{4(r(T_i)-1)}5\right),
\end{equation}
with equality when $T_i=K_2$.

By \eqref{eq:restore-a} and $\alpha_2(K_2)=0$, we have $\alpha_2(T_i)=0$ for $i=1,2$. By Lemma~\ref{lem:series-composition},
\begin{equation}\label{eq:branch-acyclic}
\begin{split}
\alpha(H)+\alpha_2(H)=2\alpha(T_1)\alpha(T_2) -\frac{\alpha(T_1)\alpha(T_2)}2=\frac32\alpha(T_1)\alpha(T_2).
\end{split}
\end{equation}

For each nontrivial $T_i$, \eqref{eq:restore-b} gives $\alpha^*(T_i)=\alpha_2^*(G_i)+\alpha^*(G_i)\geq\alpha_2^*(G_i)=\frac12\alpha_2^*(T_i)$. Since every totally cyclic orientation is almost totally cyclic, $0\leq\alpha_2^*(T_i)-\alpha^*(T_i)\leq\alpha_2^*(T_i)$ for $i=1,2$, and the upper bound improves to $\alpha_2^*(T_i)/2$ if $T_i$ is nontrivial. As at least one $T_i$ is nontrivial,
\[
\begin{aligned}
0\leq\bigl(\alpha_2^*(T_1)-\alpha^*(T_1)\bigr)\bigl(\alpha_2^*(T_2)-\alpha^*(T_2)\bigr)\leq\frac12\alpha_2^*(T_1)\alpha_2^*(T_2).
\end{aligned}
\]
Lemma~\ref{lem:series-composition} then yields
\begin{equation}\label{eq:branch-almost-cyclic}
\begin{split}
\alpha_2^*(H) =\alpha_2^*(T_1)\alpha_2^*(T_2)- \frac{ \bigl(\alpha_2^*(T_1)-\alpha^*(T_1)\bigr) \bigl(\alpha_2^*(T_2)-\alpha^*(T_2)\bigr)
}{2}\geq\frac34\alpha_2^*(T_1)\alpha_2^*(T_2).
\end{split}
\end{equation}

Put $r=r(T_1)+r(T_2)$. Since $r(T_i)=1$ when $T_i=K_2$ and $r(T_i)=1+r(G_i)>1$ otherwise, we have $r>2$. Multiplying \eqref{eq:branch-acyclic} and \eqref{eq:branch-almost-cyclic}, and applying \eqref{eq:branch-factor} to both $T_1$ and $T_2$, we obtain
\begin{equation}\label{eq:branch-orientation-product}
\begin{aligned}
\bigl(\alpha(H)+\alpha_2(H)\bigr)\alpha_2^*(H) &\geq\frac32\cdot\frac34 \bigl(\alpha(T_1)\alpha_2^*(T_1)\bigr)
\bigl(\alpha(T_2)\alpha_2^*(T_2)\bigr)\\
&\geq\frac32\cdot\frac34\cdot4^2 \tau_2(T_1)^2\tau_2(T_2)^2 \exp\!\left(\frac{4(r-2)}5\right)\\
&=18\tau_2(T_1)^2\tau_2(T_2)^2 \exp\!\left(\frac{4(r-2)}5\right).
\end{aligned}
\end{equation}

For the spanning tree counts, Lemma~\ref{lem:series-composition} and \eqref{eq:terminal-r} give
\[
\tau(H) =\tau(T_1)\tau(T_2) =\tau_2(T_1)\tau_2(T_2)r(T_1)r(T_2),
\]
\begin{equation}\label{eq:branch-tree-parameters-1}
\tau_2(H) =\tau(T_1)\tau_2(T_2) +\tau_2(T_1)\tau(T_2)=\tau_2(T_1)\tau_2(T_2) \bigl(r(T_1)+r(T_2)\bigr)=\tau_2(T_1)\tau_2(T_2)r.
\end{equation}
Therefore, by the arithmetic--geometric mean inequality,
\begin{equation}\label{eq:branch-tree-parameters-2}
r(H) =\frac{\tau(H)}{\tau_2(H)} =\frac{r(T_1)r(T_2)}r \leq\frac{\bigl(r(T_1)+r(T_2)\bigr)^2}{4r} =\frac r4.
\end{equation}
Substituting \eqref{eq:branch-tree-parameters-1} and \eqref{eq:branch-tree-parameters-2} into the right-hand side of \eqref{eq:interface} gives
\begin{equation}\label{eq:branch-interface-upper}
\begin{aligned}
2\tau_2(H)^2\exp\!\left(\frac{4r(H)}5\right)= 2\tau_2(T_1)^2\tau_2(T_2)^2r^2 \exp\!\left(\frac{4r(H)}5\right)\leq 2\tau_2(T_1)^2\tau_2(T_2)^2r^2
\exp\!\left(\frac r5\right).
\end{aligned}
\end{equation}

Comparing \eqref{eq:branch-orientation-product} with \eqref{eq:branch-interface-upper}, cancelling the positive factor $\tau_2(T_1)^2\tau_2(T_2)^2$, and dividing by $2\exp(r/5)>0$, it suffices to prove
\begin{equation}\label{eq:two-child-scalar}
9\exp\!\left(\frac{3r-8}5\right)\geq r^2.
\end{equation}

Using $\ln x\leq x-1$ for $x>0$, with $x=3r/10>0$, we obtain
\[
2\ln\frac{3r}{10} \leq2\left(\frac{3r}{10}-1\right) =\frac{3r-10}{5}.
\]
Exponentiating and multiplying by $100/9$ gives
\[
r^2 \leq\frac{100}{9}\exp\!\left(\frac{3r-10}5\right) <9\exp\!\left(\frac{3r-8}5\right),
\]
where the strict inequality follows from the direct calculation $\frac{81}{100}\exp(2/5)>1$. Thus \eqref{eq:two-child-scalar} holds, and \eqref{eq:branch-orientation-product} together with \eqref{eq:branch-interface-upper} proves \eqref{eq:interface} for $H$.
\end{proof}

\begin{theorem}\label{thm:interface}
Let $T$ be a nontrivial rooted $2$-tree with root edge $st$. Then $G=T-st$ with terminals $s$ and $t$ is connected and satisfies \eqref{eq:interface}.
\end{theorem}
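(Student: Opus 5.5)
We argue by strong induction on $|V(T)|$, using the branch decomposition described before Lemma~\ref{lem:parallel-interface}. Fix a construction of $T$ starting from $st$. As explained there, $G=T-st$ is a nonempty parallel connection of branches. Each branch has the form $T_1\ser T_2$, where $T_1$ and $T_2$ are rooted $2$-trees with root edges $sw$ and $wt$, and $|V(T_i)|<|V(T)|$. Connectedness of $G$ was already observed there: each branch is connected, and all branches share the terminals $s$ and $t$. So it remains to prove \eqref{eq:interface}.

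For a single branch $T_1\ser T_2$, we verify the hypotheses of Lemma~\ref{lem:branch}. For each $i$ with $T_i$ nontrivial, $T_i$ is a nontrivial rooted $2$-tree with strictly fewer vertices than $T$. The induction hypothesis therefore says that $G_i=T_i-e_i$, with terminals the ends of $e_i$, is connected and satisfies \eqref{eq:interface}. If $T_i=K_2$, Lemma~\ref{lem:branch} imposes no requirement. Lemma~\ref{lem:branch} then shows that every branch satisfies \eqref{eq:interface}. This also covers the base case: when $|V(T)|=3$ we have $G=P_3$, which is handled directly inside Lemma~\ref{lem:branch}.

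Finally, $G$ is the parallel connection of its branches $B_1,\dots,B_m$ with $m\ge1$, each connected. We apply Lemma~\ref{lem:parallel-interface} repeatedly to $B_1\paral B_2$, then to $(B_1\paral B_2)\paral B_3$, and so on. Each partial parallel connection is connected and satisfies \eqref{eq:interface}, so $G$ does as well.

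The only point needing care is that the parallel and series decompositions really are the two-terminal operations used in Lemmas~\ref{lem:series-composition} and \ref{lem:parallel-composition}. Distinct branches meet only in $\{s,t\}$, and within a branch $V(T_1)\cap V(T_2)=\{w\}$, with no other edges between the branches or the pieces. The construction in the text already records both facts. Given this, the analytic work is contained in Lemmas~\ref{lem:parallel-interface} and \ref{lem:branch}, so the induction itself presents no real obstacle.
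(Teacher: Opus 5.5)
Your proposal is correct and follows the paper's proof essentially step for step. You get connectedness from the branch decomposition, run strong induction on $|V(T)|$ with Lemma~\ref{lem:branch} applied to each branch (the paper states the base case $G=P_3$ separately, while you fold it into the general step), and then combine the branches by repeated use of Lemma~\ref{lem:parallel-interface}.
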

\begin{proof}
By the preceding decomposition, $G$ is a nonempty parallel connection of branches. Each branch has the form $T_1\ser T_2$, where $T_1$ and $T_2$ are the two rooted $2$-trees in that branch. Since both are connected, every branch is connected, and hence $G$ is connected.

We prove \eqref{eq:interface} by strong induction on $|V(T)|$. If $|V(T)|=3$, then $G=P_3$, and the case $T_1=T_2=K_2$ in Lemma~\ref{lem:branch} gives \eqref{eq:interface}. Suppose that $|V(T)|>3$ and that the theorem holds for every nontrivial rooted $2$-tree with fewer vertices than $T$. Consider any branch $T_1\ser T_2$ of $G$. For each $i\in\{1,2\}$ such that $T_i$ is nontrivial, we have $|V(T_i)|<|V(T)|$. Thus the induction hypothesis implies that $G_i=T_i-e_i$, where $e_i$ is the root edge of $T_i$, is connected and satisfies \eqref{eq:interface}. Lemma~\ref{lem:branch} therefore gives \eqref{eq:interface} for every branch. Combining the branches successively using Lemma~\ref{lem:parallel-interface} proves the assertion.
\end{proof}

Noble and Royle~\cite[Theorem~4.5]{NR} proved the multiplicative Merino--Welsh inequality for loopless bridgeless series-parallel graphs, which include all nontrivial $2$-trees. Theorem~\ref{thm:interface} gives the following quantitative strengthening for  nontrivial $2$-trees.

\begin{corollary}\label{cor:2tree}
Every nontrivial $2$-tree $T$ satisfies
\[
\alpha(T)\alpha^*(T)\geq\frac{8\exp(6/5)}{25}\tau(T)^2>\tau(T)^2.
\]
\end{corollary}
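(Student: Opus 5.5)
Choose any edge $st$ of $T$ as root edge; by the remark after Definition~\ref{def:2tree}, $T$ is a rooted $2$-tree with root $st$. Put $G=T-st$, so $T=\widehat G$. Theorem~\ref{thm:interface} says $G$ is connected and satisfies \eqref{eq:interface}. Write $r=r(G)$ and $\tau_2=\tau_2(G)$. By \eqref{eq:restore-tree}--\eqref{eq:restore-b},
\[
\alpha(T)=\alpha(G)+\alpha_2(G),\qquad \alpha^*(T)=\alpha_2^*(G)+\alpha^*(G)\geq\alpha_2^*(G),\qquad \tau(T)=\tau_2(1+r).
\]
Multiplying and applying \eqref{eq:interface} gives
\[
\alpha(T)\alpha^*(T)\geq\bigl(\alpha(G)+\alpha_2(G)\bigr)\alpha_2^*(G)\geq 2\tau_2^2\exp\!\left(\frac{4r}{5}\right)=\tau(T)^2\,\frac{2\exp(4r/5)}{(1+r)^2}.
\]

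It remains to bound $f(r)=2e^{4r/5}/(1+r)^2$ from below over the admissible values of $r$. We have $\ln f(r)=\ln 2+\tfrac45 r-2\ln(1+r)$, whose derivative $\tfrac45-\tfrac{2}{1+r}$ vanishes exactly at $r=3/2$. Hence $f$ has its global minimum on $r>0$ there:
\[
f(3/2)=\frac{2e^{6/5}}{(5/2)^2}=\frac{8e^{6/5}}{25}.
\]
This gives the stated constant without any restriction on $r$. Note that the actual $r$ is positive, since $G$ is connected. It is in fact at least $1/2$, by Lemma~\ref{lem:r-monotonicity} applied to $P_3\subseteq G$, though this is not needed.

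Finally, the inequality $8e^{6/5}/25>1$ is equivalent to $e^{6/5}>25/8=3.125$. This holds because $e^{6/5}>1+1.2+0.72+0.288=3.208$. The main obstacle is only to recognize that one should discard $\alpha^*(G)$ and minimize the scalar function over all $r>0$. Everything else is direct substitution.
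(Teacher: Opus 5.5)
Your proof is correct and follows the paper's argument: restore the root edge via \eqref{eq:restore-tree}--\eqref{eq:restore-b}, drop $\alpha^*(G)$, apply \eqref{eq:interface}, and minimize $2e^{4x/5}/(1+x)^2$ at $x=3/2$. One step should be stated rather than left implicit. A vanishing derivative alone does not give a global minimum; here it does because $\frac45-\frac{2}{1+x}$ is increasing, so it is negative for $x<3/2$ and positive for $x>3/2$, as the paper notes.
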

\begin{proof}
Choose any edge $st$ of $T$ as the root edge, and put $G=T-st$, with terminals $s,t$. By Theorem~\ref{thm:interface}, $G$ is connected. Together with \eqref{eq:restore-tree}--\eqref{eq:restore-b}, this gives
\[
\begin{aligned}
\frac{\alpha(T)\alpha^*(T)}{\tau(T)^2}
&=\frac{(\alpha(G)+\alpha_2(G))(\alpha_2^*(G)+\alpha^*(G))}{\tau_2(G)^2(1+r(G))^2}\\
&\geq\frac{(\alpha(G)+\alpha_2(G))\alpha_2^*(G)}{\tau_2(G)^2(1+r(G))^2}\\
&\geq\frac{2\exp\!\left(4r(G)/5\right)}{(1+r(G))^2}.
\end{aligned}
\]
Let $f(x)=2\exp(4x/5)/(1+x)^2$ for $x>0$. Since
\[
\frac{f'(x)}{f(x)}=\frac{4}{5}-\frac{2}{1+x},
\]
and $f(x)>0$, we have $f'(x)<0$ for $0<x<3/2$ and $f'(x)>0$ for $x>3/2$. Thus $f$ attains its minimum at $x=3/2$. Since $r(G)>0$, it follows that
\[
\frac{\alpha(T)\alpha^*(T)}{\tau(T)^2}\geq\frac{8\exp(6/5)}{25}>1,
\]
where the last inequality follows by direct calculation.
\end{proof}

Let $H$ be a connected graph satisfying the multiplicative Merino--Welsh inequality. The inequality $\alpha(H)\alpha^*(H)\geq\tau(H)^2>0$ implies $\alpha^*(H)>0$, so $H$ has no bridges. For each $e\in E(H)$, the graph $H-e$ is therefore connected, and we take the ends of $e$ as its terminals. We next give a sufficient condition for edge decorations to preserve the multiplicative Merino--Welsh inequality.

\begin{lemma}\label{lem:edgeattach}
Let $H$ be a connected graph satisfying the multiplicative Merino--Welsh inequality, and let $e=st\in E(H)$ with $r(H-e)\geq\frac32$. Let $T_e$ be any nontrivial rooted $2$-tree with root edge $e$ and $V(H)\cap V(T_e)=\{s,t\}$. Then $G=H\cup T_e$ also satisfies the inequality.
\end{lemma}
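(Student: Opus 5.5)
We set $G_e=T_e-st$ with terminals $s,t$. By Theorem~\ref{thm:interface}, $G_e$ is connected and satisfies \eqref{eq:interface}, and $T_e=\widehat{G_e}$. So $G=H\cup T_e$ is exactly the graph $J'$ of Lemma~\ref{lem:substitution} with $J=H$. That lemma gives
\[
\alpha(G)\alpha^*(G)\geq\alpha(H)\alpha^*(H)\,\frac{\bigl(\alpha(G_e)+\alpha_2(G_e)\bigr)\alpha_2^*(G_e)}{2}\geq\tau(H)^2\,\tau_2(G_e)^2\exp\!\left(\frac{4r(G_e)}5\right).
\]
Here the second inequality uses the hypothesis on $H$ and \eqref{eq:interface}. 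It therefore suffices to show that
\[
\tau(G)\leq\tau(H)\,\tau_2(G_e)\exp\!\left(\frac{2r(G_e)}5\right).
\]

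To compute $\tau(G)$, we view $G$ as a parallel connection with terminals $s,t$. Namely, $G=(H-e)\paral G_e\paral K_2$, or equivalently $G=H\paral G_e$. Lemma~\ref{lem:parallel-composition} gives
\[
\tau(G)=\tau(H)\tau_2(G_e)+\tau_2(H)\tau(G_e)=\tau(H)\tau_2(G_e)\Bigl(1+\frac{r(G_e)}{r(H)}\Bigr).
\]
For $H$ with terminals $s,t$, \eqref{eq:restore-tree} gives $\tau(H)=\tau_2(H-e)(1+r(H-e))$ and $\tau_2(H)=\tau_2(H-e)$. Hence $r(H)=1+r(H-e)\geq\frac52$. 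The required inequality thus reduces to the scalar claim
\[
1+\frac{x}{r(H)}\leq\exp\!\left(\frac{2x}{5}\right),\qquad x=r(G_e)>0.
\]
Since $1+x/r(H)\leq 1+2x/5\leq\exp(2x/5)$, the claim follows from $r(H)\geq 5/2$ and $1+u\leq e^u$.

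The only delicate point is the bookkeeping, so we will check two things carefully. First, the parallel-composition formula applies to $H$ as a two-terminal graph, with the edge $e$ retained once. The graphs $H$ and $T_e$ share exactly the edge $st$, so we use the decomposition $G=H\paral G_e$ with $G_e=T_e-st$ and no double counting of $e$. Second, $r(H)=1+r(H-e)$ follows from \eqref{eq:restore-tree} and \eqref{eq:terminal-r}. I expect no real obstacle beyond matching the constants. The hypothesis $r(H-e)\geq 3/2$ is precisely what makes $1/r(H)\leq 2/5$, which is the coefficient left over from the factor $\exp(4r/5)$ after taking square roots.
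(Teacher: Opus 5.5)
Your proof is correct and follows the paper's argument. You bound the orientation ratio via Lemma~\ref{lem:substitution} and \eqref{eq:interface}, compute $\tau(G)/\tau(H)=\tau_2(G_e)\bigl(1+r(G_e)/(1+r(H-e))\bigr)$ by parallel composition, and close with $1+u\le e^u$ using $r(H-e)\ge 3/2$. The only cosmetic difference is that you decompose $G$ as $H\paral G_e$ rather than the paper's $(H-e)\paral\widehat{G_e}$, which gives the same formula.
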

\begin{proof}
Put $G_e=T_e-e$ with terminals $s,t$. Then $T_e=\widehat{G_{e}}$. By Lemma~\ref{lem:substitution} and Theorem \ref{thm:interface},
\begin{equation}\label{eq:edge-orientation-bound}
\frac{\alpha(G)\alpha^*(G)}{\alpha(H)\alpha^*(H)} \geq \frac{\bigl(\alpha(G_e)+\alpha_2(G_e)\bigr)\alpha_2^*(G_e)}2 \geq
\tau_2(G_e)^2\exp\!\left(\frac{4r(G_e)}5\right).
\end{equation}

Since $H=(H-e)\paral K_2$ and $G=(H-e)\paral\widehat{G_{e}}$, Lemma~\ref{lem:parallel-composition}, \eqref{eq:terminal-r}, and \eqref{eq:restore-tree} yield
\begin{equation}\label{eq:edge-tree-ratio}
\begin{aligned}
\frac{\tau(G)}{\tau(H)}
&=\frac{\tau(H-e)\tau_2(\widehat{G_{e}})
       +\tau_2(H-e)\tau(\widehat{G_{e}})}
       {\tau(H-e)+\tau_2(H-e)}\\
       &=\frac{\tau_2(H-e)\tau_2(G_e)
        \bigl(1+r(H-e)+r(G_e)\bigr)}
       {\tau_2(H-e)\bigl(1+r(H-e)\bigr)}\\
&=\tau_2(G_e)
  \left(1+\frac{r(G_e)}{1+r(H-e)}\right).
\end{aligned}
\end{equation}
Using $\ln(1+x)\leq x$ for $x\geq0$, with $x=r(G_e)/(1+r(H-e))$, and $r(H-e)\geq3/2$, we obtain
\[
2\ln\left(1+\frac{r(G_e)}{1+r(H-e)}\right) \leq\frac{2r(G_e)}{1+r(H-e)} \leq\frac45r(G_e).
\]
Exponentiating and using \eqref{eq:edge-tree-ratio} gives
\begin{equation}\label{eq:edge-tree-bound}
\left(\frac{\tau(G)}{\tau(H)}\right)^2 \leq \tau_2(G_e)^2\exp\!\left(\frac{4r(G_e)}5\right).
\end{equation}

By \eqref{eq:edge-orientation-bound}, \eqref{eq:edge-tree-bound}, and $\alpha(H)\alpha^*(H)\geq\tau(H)^2$, we conclude that
\[
\alpha(G)\alpha^*(G) \geq \alpha(H)\alpha^*(H) \left(\frac{\tau(G)}{\tau(H)}\right)^2 \geq\tau(G)^2.\qedhere
\]
\end{proof}

\begin{corollary}\label{cor:large-edge}
Let $H$ be a connected graph satisfying the multiplicative Merino--Welsh inequality. If both ends of an edge $e$ lie in a clique of $H$ with at least five vertices, then attaching any nontrivial rooted $2$-tree along $e$ to $H$ preserves the inequality.
\end{corollary}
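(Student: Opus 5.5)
The plan is to reduce the statement to Lemma~\ref{lem:edgeattach}. It suffices to check that $r(H-e)\geq\frac32$ whenever both ends $s,t$ of $e$ lie in a clique $K$ of $H$ with $|K|=p\geq5$. Since $H$ is simple in our setting (or, more generally, any extra parallel edges only add terms), the clique $K$ determines a complete subgraph $K_p\subseteq H$ containing $e=st$.

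First I would note why $H-e$ is a connected two-terminal graph with terminals $s,t$. As remarked before Lemma~\ref{lem:edgeattach}, $H$ satisfies the multiplicative inequality, so it is bridgeless, and hence $H-e$ is connected. The subgraph $K_p-st$ is connected with the same terminals and is contained in $H-e$.

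Next I would apply Lemma~\ref{lem:r-monotonicity} to the pair $K_p-st\subseteq H-e$. This gives $r(H-e)\geq r(K_p-st)$. By Lemma~\ref{lem:clique-response}, $r(K_p-st)=\frac{p-2}{2}$, and this is at least $\frac32$ because $p\geq5$. Thus the hypothesis $r(H-e)\geq\frac32$ of Lemma~\ref{lem:edgeattach} holds.

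Finally, let $T_e$ be a nontrivial rooted $2$-tree attached along $e$. By definition of an edge attachment, its root edge is identified with $e$ and $V(H)\cap V(T_e)=\{s,t\}$. Lemma~\ref{lem:edgeattach} then shows that $H\cup T_e$ satisfies the multiplicative Merino--Welsh inequality.

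There is no real obstacle. The only point needing care is the monotonicity step: Lemma~\ref{lem:r-monotonicity} requires both graphs to be connected and to share the terminals $s,t$, and bridgelessness of $H$ supplies the connectedness of $H-e$.
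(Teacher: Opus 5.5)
Your proposal is correct and follows essentially the same route as the paper: Rayleigh monotonicity (Lemma~\ref{lem:r-monotonicity}) applied to $K_p-st\subseteq H-e$, combined with $r(K_p-st)=\frac{p-2}{2}\geq\frac32$ from Lemma~\ref{lem:clique-response}, verifies the hypothesis of Lemma~\ref{lem:edgeattach}. Your added remark that bridgelessness of $H$ makes $H-e$ connected matches the observation the paper records just before Lemma~\ref{lem:edgeattach}.
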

\begin{proof}
Let $e=st$ belong to a complete subgraph $K_p$ of $H$, where $p\geq5$. With $s,t$ as terminals, $K_p-e$ is a subgraph of $H-e$. By Lemma~\ref{lem:r-monotonicity}, $r(H-e)\geq r(K_p-e)$. Lemma~\ref{lem:clique-response} gives \[r(K_p-e)=\frac{p-2}{2}\geq\frac32.\]
Thus $r(H-e)\geq3/2$, and the conclusion follows from Lemma~\ref{lem:edgeattach}.
\end{proof}

\section{\texorpdfstring{Complete graph attachments and decorated $K_4$ graphs}{Complete graph attachments and decorated K4
graphs}}\label{sec:cliques}

In this section, we compare orientation and spanning tree counts when a complete graph is attached and edge decorations are then added. These comparisons give sufficient conditions for preserving the multiplicative Merino--Welsh inequality. Once attaching $K_q$ with $q\geq5$ is shown to preserve the inequality, Corollary~\ref{cor:large-edge} allows edge decorations to be added successively along its edges. For $K_4$, we treat attachments along an edge and along a triangle separately.

\subsection{Complete graph attachments}
Let $S$ be a clique of $H$ and let $q\geq |S|$. Choose a copy of $K_q$ such that $V(H)\cap V(K_q)=S$. We call the operation of forming $H\cup K_q$ a \emph{complete graph attachment} of $K_q$ to $H$ along $S$.

\begin{lemma}\label{lem:clique-gains}
Let $H$ be a connected simple graph, and let $S$ be a clique of order $a\geq2$ contained in a clique of $H$ of order $p$. Let $K_q$ be a complete graph with $q>a$ and $V(H)\cap V(K_q)=S$, and put $G=H\cup K_q$. Then
\begin{align}
&\frac{\alpha(G)}{\alpha(H)} =\frac{q!}{a!},\label{eq:clique-a}\\
&\frac{\alpha^*(G)}{\alpha^*(H)} \geq\prod_{d=a}^{q-1}(2^d-2) \quad\text{if~}\alpha^*(H)>0,\label{eq:clique-c}\\
&\frac{\tau(G)}{\tau(H)} \leq aq^{q-a-1} \left(1+\frac{q-a}{p}\right)^{a-1}. \label{eq:clique-t}
\end{align}
\end{lemma}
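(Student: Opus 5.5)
The plan is to treat the three estimates separately, building $G$ from $H$ by adding the vertices of $V(K_q)\setminus S$ one at a time. Label these new vertices $u_{a+1},\dots,u_q$ and let $G_d$ be $H$ together with $u_{a+1},\dots,u_d$ and all edges of $K_q$ among $S\cup\{u_{a+1},\dots,u_d\}$. Thus $G_a=H$ and $G_q=G$. At step $d$ the new vertex $u_{d+1}$ is joined exactly to a clique $Q_d=S\cup\{u_{a+1},\dots,u_d\}$ of order $d$, and this clique already lies in $G_d$.

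For \eqref{eq:clique-a}, fix an acyclic orientation of $G_d$. Its restriction to the clique $Q_d$ is a transitive tournament, hence a linear order. Because $u_{d+1}$ is adjacent only to $Q_d$, any directed cycle through $u_{d+1}$ must enter and leave it via $Q_d$. So an orientation of the $d$ new edges is acyclic exactly when it inserts $u_{d+1}$ into this linear order. Any directed path in $G_d$ between two vertices of $Q_d$ is consistent with the order, so there are exactly $d+1$ such insertions. This gives $\alpha(G_{d+1})=(d+1)\alpha(G_d)$, and the product over $d=a,\dots,q-1$ is $q!/a!$.

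For \eqref{eq:clique-c}, fix a totally cyclic orientation of $G_d$ and orient the $d$ edges at $u_{d+1}$ so that $u_{d+1}$ has at least one in-edge and at least one out-edge. There are $2^d-2$ such choices. Take an in-neighbor $x$ and an out-neighbor $y$ in $Q_d$. The edge $xy$ lies in $G_d$; if it is oriented from $y$ to $x$, it closes the cycle $x\to u_{d+1}\to y\to x$. Otherwise, total cyclicity of $G_d$ gives a directed path from $y$ back to $x$, since $xy$ lies on a directed cycle. So every new edge $x u_{d+1}$ or $u_{d+1}y$ lies on a directed cycle, and old edges keep their cycles. The resulting orientations are distinct, so $\alpha^*(G_{d+1})\ge(2^d-2)\alpha^*(G_d)$, and iterating gives the product.

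For \eqref{eq:clique-t}, apply Lemma~\ref{lem:attachment-tree-comparison} with $J=K_q$ and the clique $K_p\supseteq S$ of $H$; this reduces the claim to bounding $\tau(K_p\cup K_q)/\tau(K_p)$. Lemma~\ref{lem:two-complete-tree} together with Cayley's formula $\tau(K_p)=p^{p-2}$ gives
\[
\frac{\tau(K_p\cup K_q)}{p^{p-2}}=a\,q^{q-a-1}\,\frac{(p+q-a)^{a-1}}{p^{a-1}}=a\,q^{q-a-1}\Bigl(1+\frac{q-a}{p}\Bigr)^{a-1},
\]
which is exactly the stated bound. The main point needing care is the hypothesis of Lemma~\ref{lem:two-complete-tree}: it requires $a\le\min\{p,q\}$, which holds since $S\subseteq K_p$ and $q>a$. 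If $p=a$, the formula there reduces to the Cayley case, which is still covered.

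The step needing the most care is the acyclic count. One must justify that exactly $d+1$ extensions exist, which relies on $u_{d+1}$ being simplicial with neighborhood a clique of $G_d$, so that the linear order on $Q_d$ captures all reachability constraints among its vertices.
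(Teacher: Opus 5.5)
Your proposal is correct and follows the paper's proof essentially step by step: you add the vertices of $V(K_q)\setminus S$ one at a time, count exactly $d+1$ acyclic and at least $2^d-2$ totally cyclic extensions at each step, and bound the tree ratio via Lemma~\ref{lem:attachment-tree-comparison} and Lemma~\ref{lem:two-complete-tree}. The only minor difference is in the totally cyclic step. You close each cycle locally through the clique edge $xy$, whereas the paper uses the fact that a totally cyclic orientation of the connected graph $G_d$ is strongly connected; both arguments are valid.
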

\begin{proof}
Starting from $H$, add the vertices of $V(K_q)\setminus S$ in a fixed order to obtain $G$. At each step, let $J$ be the graph already constructed and $v$ be the newly added vertex, and let $J'$ be the resulting graph. The neighbors of $v$ in $J'$ are precisely the vertices of $S$ and the vertices of $V(K_q)\setminus S$ already added. They form a clique of $J$ of order $d$, where $d$ takes the values $a,a+1,\ldots,q-1$.

For \eqref{eq:clique-a}, fix an acyclic orientation of $J$. Choose an ordering of the vertices of $J$ such that, whenever two vertices are adjacent, their edge is directed from the earlier vertex to the later one. Such an ordering is obtained by repeatedly choosing and deleting a vertex with no incoming edges, which always exists in a nonempty acyclic digraph~\cite[Section~1]{Stanley}. Since the neighbors of $v$  form a clique in $J$, they have a unique order $u_1,u_2,\ldots,u_d$ in which every edge $u_iu_j$ with $i<j$ is directed from $u_i$ to $u_j$. For each $k\in\{0,1,\ldots,d\}$, insert $v$ into the chosen ordering of $V(J)$: place it first if $k=0$, and immediately
after $u_k$ if $k\geq1$. Keep all edge directions in $J$ unchanged, and direct $vu_i$ towards $v$ if $i\leq k$ and away from $v$ if $i>k$. Every edge of $J'$ is then directed from an earlier vertex to a later one in the resulting ordering, so the orientation is acyclic. Thus these $d+1$ choices give distinct acyclic orientations of $J'$.

Conversely, consider any acyclic orientation of $J'$ in which the directions of all edges of $J$ are unchanged. If $vu_j$ is directed towards $v$, then every edge $vu_i$ with $i<j$ must also be directed towards $v$. Otherwise, the edges $vu_i$, $u_iu_j$ and $u_jv$ would form a directed cycle. Hence exactly the first $k$ neighbors have their edges directed towards $v$, for a unique $k\in\{0,1,\ldots,d\}$. Thus every acyclic orientation of $J$ has exactly $d+1$ acyclic extensions, giving $\alpha(J')=(d+1)\alpha(J)$. Applying this identity at each step, we obtain
\[\frac{\alpha(G)}{\alpha(H)}=\prod_{d=a}^{q-1}(d+1)=\frac{q!}{a!}.\]

For \eqref{eq:clique-c}, suppose that $\alpha^*(H)>0$. At each step, fix a totally cyclic orientation of $J$. Since $J$ is connected, this orientation is strongly connected. For each such orientation, the $d$ newly added edges have $2^d$ possible orientations. Excluding the two choices in which all new edges are directed towards $v$ or all are directed
away from $v$ leaves $2^d-2$ choices, each giving $v$ at least one incoming and one outgoing edge. An incoming edge lets every vertex of $J$ reach $v$, and an outgoing edge lets $v$ reach every vertex of $J$, since the orientation of $J$ is strongly connected. Thus the resulting orientation of $J'$ is strongly connected and hence totally cyclic. Different choices of the new edge directions give different orientations of $J'$, and orientations obtained from different orientations of $J$ are also distinct. Consequently, $\alpha^*(J')\geq(2^d-2)\alpha^*(J)$. Applying this inequality at each step and dividing by $\alpha^*(H)$, we obtain
\[\frac{\alpha^*(G)}{\alpha^*(H)}\geq\prod_{d=a}^{q-1}(2^d-2).\]

For \eqref{eq:clique-t}, choose a complete subgraph $K_p$ of $H$ containing $S$, and put $J_0=K_p\cup K_q$. By Lemma~\ref{lem:attachment-tree-comparison} and Lemma~\ref{lem:two-complete-tree},
\[
\frac{\tau(G)}{\tau(H)}
\leq\frac{\tau(J_0)}{\tau(K_p)}
=aq^{q-a-1}\left(1+\frac{q-a}{p}\right)^{a-1}.\qedhere
\]
\end{proof}

\begin{lemma}\label{lem:highdegree}
Let $H$ be a connected simple graph satisfying the multiplicative Merino--Welsh inequality. Let $G$ be obtained from $H$ by adding a new vertex adjacent precisely to the vertices of a clique of $H$ of order $d\geq4$. Then $G$ also satisfies this inequality.
\end{lemma}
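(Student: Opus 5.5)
This is the case $q=a+1$ of Lemma~\ref{lem:clique-gains}, with $a=d$. Let $S$ be the clique of order $d$ to which the new vertex $v$ is joined, so $G=H\cup K_{d+1}$ with $V(H)\cap V(K_{d+1})=S$. Take $p=d$ in Lemma~\ref{lem:clique-gains}; this is allowed because $S$ is itself a clique of order $d$. We may also take $p$ to be the order of any larger clique of $H$ containing $S$, but the bound only improves with larger $p$, so $p=d$ is enough. Since $H$ satisfies the multiplicative inequality, $\alpha^*(H)>0$, so \eqref{eq:clique-c} applies. The lemma then gives
\[
\frac{\alpha(G)}{\alpha(H)}=d+1,\qquad \frac{\alpha^*(G)}{\alpha^*(H)}\geq 2^d-2,\qquad \frac{\tau(G)}{\tau(H)}\leq d\left(1+\frac1d\right)^{d-1}.
\]
The last bound also follows directly: attaching a vertex to a clique of order $d$ gives $\tau(K_{d+1})/\tau(K_d)=(d+1)^{d-1}/d^{d-2}=d(1+1/d)^{d-1}$ via Lemma~\ref{lem:attachment-tree-comparison}.

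Multiplying the two orientation bounds and using $\alpha(H)\alpha^*(H)\geq\tau(H)^2$, it suffices to prove the scalar inequality
\[
(d+1)(2^d-2)\geq d^2\left(1+\frac1d\right)^{2d-2}=\frac{(d+1)^{2d-2}}{d^{2d-4}}\qquad(d\geq4).
\]
Use $(1+1/d)^{2d-2}<e^2<7.39$. Then the right side is less than $7.39d^2$. It therefore suffices to show $(d+1)(2^d-2)\geq 7.39d^2$. At $d=4$ the left side is $5\cdot14=70$, while the right side is less than $7.39\cdot16\approx118$. So the crude bound fails at $d=4$, and small $d$ must be checked with the exact expression.

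For $d=4$ the exact right side is $5^6/4^4=15625/256\approx61.0<70$, so the inequality holds. For $d=5$ it is $6^8/5^6=1679616/15625\approx107.5$, against the left side $6\cdot30=180$. For $d\geq6$ the crude bound suffices: $7(2^6-2)=434\geq 7.39\cdot36\approx266$. For the induction, the ratio of the left side from $d$ to $d+1$ is at least $2\cdot\frac{d+2}{d+1}>2$, while that of $7.39d^2$ is $(1+1/d)^2<2$. Hence the inequality holds for all $d\geq4$, and
\[
\alpha(G)\alpha^*(G)\geq (d+1)(2^d-2)\,\alpha(H)\alpha^*(H)\geq \left(\frac{\tau(G)}{\tau(H)}\right)^2\tau(H)^2=\tau(G)^2.
\]

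The only delicate point is the small case $d=4$. There the margin is modest, $70$ versus about $61$, and the crude exponential bound is too weak, so the exact value of the tree-ratio bound from Lemma~\ref{lem:clique-gains} with $p=d$ must be used. This also explains the hypothesis $d\geq4$. For $d=3$ the comparison is $4\cdot6=24$ against $4^4/3^2\approx28.4$, which fails, consistent with the paper treating $K_4$ attachments separately.
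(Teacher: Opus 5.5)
Your proof is correct and takes the paper's route: Lemma~\ref{lem:clique-gains} with $a=p=d$ and $q=d+1$ reduces the claim to the scalar inequality $d^2(1+1/d)^{2d-2}\le(d+1)(2^d-2)$. The only difference is how you verify that inequality: you use $(1+1/d)^{2d-2}<e^2$ together with exact checks at $d=4,5$, whereas the paper inducts directly on the equivalent form $d(1+1/d)^{2d-3}\le 2^d-2$, starting from the base case $3125/256<14$.
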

\begin{proof}
Write $G=H\cup K_{d+1}$, where $K_{d+1}$ is the complete subgraph induced by the new vertex and its neighbors. Applying Lemma~\ref{lem:clique-gains} with $a=p=d$ and $q=d+1$, we obtain
\begin{equation}\label{eq:highdegree-gains}
\frac{\alpha(G)\alpha^*(G)}{\alpha(H)\alpha^*(H)}
\geq(d+1)(2^d-2),
\qquad
\frac{\tau(G)}{\tau(H)}
\leq d\left(1+\frac1d\right)^{d-1}.
\end{equation}
Since $H$ satisfies the multiplicative Merino--Welsh inequality, \eqref{eq:highdegree-gains} shows that it suffices to prove
\[
\left[d\left(1+\frac1d\right)^{d-1}\right]^2
\leq(d+1)(2^d-2).
\]
Dividing both sides by $d+1=d(1+1/d)$ gives the equivalent inequality
\begin{equation}\label{eq:highdegree-scalar}
d\left(1+\frac1d\right)^{2d-3}\leq2^d-2.
\end{equation}

We prove the inequality  \eqref{eq:highdegree-scalar} by induction on $d$. For $d=4$, it follows from $3125/256<14$. Suppose that it holds for some $d\geq4$. By the induction hypothesis, we obtain
\begin{align*}
(d+1)\left(1+\frac1{d+1}\right)^{2d-1} &<(d+1)\left(1+\frac1d\right)^{2d-1}\\
&=\left(1+\frac1d\right)^3d\left(1+\frac1d\right)^{2d-3}\\
&\leq\frac{125}{64}(2^d-2)<2^{d+1}-2.
\end{align*}
This completes the induction and proves \eqref{eq:highdegree-scalar}.
\end{proof}

\begin{lemma}\label{lem:largeclique}
Let $H$ be a connected simple graph satisfying the multiplicative Merino--Welsh inequality. Let $S$ be a clique  of order $a\geq2$, properly contained in a clique of $H$ of order $p$. Let $G=H\cup K_q$, where $q>a$, $q\geq5$ and $V(H)\cap V(K_q)=S$. Then $G$ also satisfies this inequality.
\end{lemma}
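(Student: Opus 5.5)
The plan is to combine the three estimates of Lemma~\ref{lem:clique-gains} and reduce everything to a scalar inequality in $a$ and $q$. Since $H$ satisfies $\alpha(H)\alpha^*(H)\geq\tau(H)^2>0$, we have $\alpha^*(H)>0$, so \eqref{eq:clique-c} applies. Multiplying \eqref{eq:clique-a} and \eqref{eq:clique-c}, and squaring \eqref{eq:clique-t}, it suffices to show
\[
a^2q^{2(q-a-1)}\left(1+\frac{q-a}{p}\right)^{2(a-1)}\leq\frac{q!}{a!}\prod_{d=a}^{q-1}(2^d-2).
\]
Then
\[
\alpha(G)\alpha^*(G)\geq\alpha(H)\alpha^*(H)\left(\frac{\tau(G)}{\tau(H)}\right)^2\geq\tau(G)^2 .
\]
Because $S$ is properly contained in a clique of order $p$, we have $p\geq a+1$. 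The left side is decreasing in $p$, so we may replace $1+(q-a)/p$ by $(q+1)/(a+1)$. Write $L(a,q)$ and $R(a,q)$ for the resulting left and right sides; the goal becomes $L(a,q)\leq R(a,q)$ for all $2\leq a<q$ with $q\geq5$.

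I would prove this by induction on $q$ with $a$ fixed.

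\emph{Base cases.} For $q=a+1$ with $a\geq4$, the inequality reads
\[
a^2\left(\frac{a+2}{a+1}\right)^{2(a-1)}\leq(a+1)(2^a-2).
\]
Its left side is at most $a^2(1+1/a)^{2(a-1)}$, so this is exactly the scalar inequality \eqref{eq:highdegree-scalar} already established in the proof of Lemma~\ref{lem:highdegree}. For $a\in\{2,3\}$, the base case is $q=5$, and these two instances are checked numerically. For $a=2$ the left side is $4\cdot5^4\cdot4=10000$ and the right side is $60\cdot2\cdot6\cdot14=10080$. For $a=3$ the left side is $9\cdot25\cdot(3/2)^4$, which is far below $20\cdot6\cdot14$.

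\emph{Inductive step.} Passing from $q$ to $q+1$ multiplies the right side by $(q+1)(2^q-2)$. It multiplies the left side by
\[
\frac{(q+1)^{2(q-a)}}{q^{2(q-a-1)}}\left(\frac{q+2}{q+1}\right)^{2(a-1)}
=(q+1)^2\left(1+\frac1q\right)^{2(q-a-1)}\left(1+\frac1{q+1}\right)^{2(a-1)}
\leq e^{2}(q+1)^2\left(1+\frac1q\right)^{2}\cdot\text{(small)} .
\]
More crudely, both binomial-type factors together are bounded by $\exp\bigl(2(q-a-1)/q+2(a-1)/(q+1)\bigr)\leq e^2$. So the step factor on the left is at most $e^2(q+1)^2$. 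It therefore suffices to have
\[
2^q-2\geq e^2(q+1),
\]
which holds for all $q\geq7$, since $126\geq 8e^2\approx59.1$.

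The main obstacle is the few small values of $q$ not covered by this clean step, namely the steps $q=5\to6$ and $q=6\to7$, together with the tight base case $a=2$, $q=5$, which has almost no slack. For these I would verify $L(a,q)\leq R(a,q)$ directly for all $a<q\leq7$; this is a finite check. The step $5\to6$ needs
\[
2^5-2=30\geq e^2\cdot6\approx44 ,
\]
which fails with the crude bound. There I would use the exact step factor
\[
(q+1)^2(1+1/q)^{2(q-a-1)}\bigl(1+1/(q+1)\bigr)^{2(a-1)}
\]
instead of $e^2(q+1)^2$, or simply check the cases $q=6,7$ by direct computation for each $a\leq q-1$. If that route proves too tight, the fallback is to start the clean induction at $q=7$ and verify all pairs with $q\leq7$ by explicit evaluation.
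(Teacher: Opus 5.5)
Your proof is correct, but it is organized differently from the paper's. You apply Lemma~\ref{lem:clique-gains} once to the whole $K_q$ and then prove the two-parameter inequality $L(a,q)\le R(a,q)$ by induction on $q$. The paper never writes this inequality down. Instead it builds $G$ through intermediate graphs that are each shown to satisfy the inequality:
\begin{itemize}
\item for $a=2$ it first attaches a $K_5$, checking $10080>100^2$, which is your base case $(2,5)$;
\item for $a=3$ it first attaches a $K_4$, checking $24>(75/16)^2$, which is your inequality evaluated at $(a,q)=(3,4)$;
\item it then adds the remaining vertices one at a time via Lemma~\ref{lem:highdegree}, which for $a\ge4$ is used from the start.
\end{itemize}

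Unwinding the two arguments shows that they rest on the same estimates. This also removes what you call the main obstacle. Since $1+1/(q+1)\le 1+1/q$, your step factor satisfies
\[
(q+1)^2\Bigl(1+\frac1q\Bigr)^{2(q-a-1)}\Bigl(1+\frac1{q+1}\Bigr)^{2(a-1)}
\le(q+1)^2\Bigl(1+\frac1q\Bigr)^{2(q-2)}
=\Bigl[q\Bigl(1+\frac1q\Bigr)^{q-1}\Bigr]^2 .
\]
After multiplying by $d+1$, \eqref{eq:highdegree-scalar} with $d=q\ge5$ says exactly that the right-hand side is at most $(q+1)(2^q-2)$. So each of your inductive steps is an instance of the inequality already proved for Lemma~\ref{lem:highdegree}. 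The crude $e^2$ bound, the step $5\to6$, and the finite check for $q\le7$ are therefore unnecessary. Only the base cases $(2,5)$, $(3,5)$ and $(a,a+1)$ with $a\ge4$ remain, and you handle these correctly.

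Your accounting of the small cases is slightly off, though harmlessly. The crude bound already covers the step $6\to7$, since $62>7e^2$; only $5\to6$ fails it. There the exact factor is at most $36(6/5)^6<108<180=6\cdot 30$. Also, the display ending in ``(small)'' is not a valid bound as written and should simply be replaced by the estimate above.

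The two organizations buy different things:
\begin{itemize}
\item The paper's is modular. It establishes the inequality for every intermediate graph, and it needs only a one-variable scalar inequality plus two numerical checks.
\item Yours gives a single explicit comparison for the whole attachment of $K_q$, at the cost of a two-variable induction.
\end{itemize}
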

\begin{proof}
Note that $p\geq a+1$. Next we consider the following three cases.
\vskip 0.25cm\noindent
\textbf{Case 1:} $a\geq4$. Starting from $H$, add the vertices of $V(K_q)\setminus S$ one at a time. At each step, the new vertex is adjacent precisely to the vertices of $S$ and the new vertices already added. These vertices form a clique of order at least four, so Lemma~\ref{lem:highdegree} applies at every step.

\vskip 0.25cm\noindent
\textbf{Case 2:} $a=3$.
Choose a complete subgraph $K_4$ of $K_q$ containing $S$, and put $H'=H\cup K_4$. Since $p\geq4$, $S$ is contained in a clique of $H$ of order four. Applying Lemma~\ref{lem:clique-gains} with $a=3$, $p=4$ and $q=4$, we obtain
\[
\begin{aligned}
\frac{\tau(H')}{\tau(H)}
&\leq3\left(1+\frac14\right)^2
=\frac{75}{16},\\
\frac{\alpha(H')\alpha^*(H')}{\alpha(H)\alpha^*(H)}
&\geq4(2^3-2)
=24>\left(\frac{75}{16}\right)^2.
\end{aligned}
\]

\vskip 0.25cm\noindent
\textbf{Case 3:} $a=2$.
Choose a complete subgraph $K_5$ of $K_q$ containing $S$, and put $H'=H\cup K_5$. Since $p\geq3$, the edge with ends in $S$ belongs to a triangle of $H$. Applying Lemma~\ref{lem:clique-gains} with $a=2$, $p=3$ and $q=5$, we obtain
\[
\begin{aligned}
\frac{\tau(H')}{\tau(H)}
&\leq2\cdot5^2\left(1+\frac33\right)
=100,\\
\frac{\alpha(H')\alpha^*(H')}{\alpha(H)\alpha^*(H)}
&\geq\frac{5!}{2!}\prod_{d=2}^{4}(2^d-2)
=10080>100^2.
\end{aligned}
\]

In Cases 2 and 3, these bounds and the assumed inequality for $H$ show that $H'$ also satisfies the multiplicative Merino--Welsh inequality. Add any remaining new vertices successively. Each has a neighborhood consisting of $S$ and the new vertices already added, which form a clique of order at least four. Applying Lemma~\ref{lem:highdegree} at each step completes the proof.
\end{proof}

\begin{lemma}[\cite{CM}]\label{lem:complete}
Every complete graph $K_q$ with $q\geq3$ satisfies the multiplicative Merino--Welsh inequality.
\end{lemma}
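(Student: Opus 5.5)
The plan is to build $K_q$ one vertex at a time, using Lemma~\ref{lem:highdegree} for the large steps and checking the small complete graphs directly. Start with $K_3$. Here $\tau(K_3)=3$, $\alpha(K_3)=6$ and $\alpha^*(K_3)=2$. The product is $12\geq 9$, so the inequality holds.

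Next treat $K_4$ directly. We have $\tau(K_4)=16$ by Cayley's formula and $\alpha(K_4)=4!=24$. For $\alpha^*(K_4)$ there are two options. One is to use $T_{K_4}(0,2)$, computed from the known Tutte polynomial
\[
T_{K_4}(x,y)=x^3+3x^2+2x+4xy+2y+3y^2+y^3,
\]
which gives $\alpha^*(K_4)=T_{K_4}(0,2)=4+12+8=24$. The other is to apply \eqref{eq:clique-c} with $a=3$ and $q=4$, starting from $K_3$. This gives only $\alpha^*(K_4)\geq 2\cdot 6=12$, and then $24\cdot 12=288\geq 256$ already suffices. Either way $\alpha(K_4)\alpha^*(K_4)\geq 16^2$.

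For $q\geq5$, induct. Suppose $K_{q-1}$ satisfies the inequality with $q-1\geq4$. Then $K_q$ is obtained from $K_{q-1}$ by adding a new vertex adjacent to all $d=q-1\geq4$ vertices of the clique $K_{q-1}$. Lemma~\ref{lem:highdegree} applies directly and gives the inequality for $K_q$.

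The only point needing care is the base case $K_4$. Lemma~\ref{lem:highdegree} requires $d\geq4$, so it cannot produce $K_4$ from $K_3$. Instead we rely on the explicit counts above, using the cruder bound \eqref{eq:clique-c} as a fallback if the Tutte evaluation is not taken as given. Alternatively, one can simply cite Conde and Merino~\cite{CM}, who verified the inequality for threshold graphs, a class that includes complete graphs.
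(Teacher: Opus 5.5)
Your proof is correct, but it takes a different route from the paper's. The paper checks $K_3$ by hand ($6\cdot 2>9$) and, for every $q\geq4$, cites Conde and Merino \cite[Lemma~3.4]{CM} with $n=q$, $m=0$.

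You instead derive the lemma from the paper's own machinery:
\begin{itemize}
\item $K_4$ is settled either by the explicit evaluation $T_{K_4}(0,2)=24$, or by \eqref{eq:clique-c} applied to $H=K_3$ with $a=3$. The latter gives $\alpha^*(K_4)\geq12$, and $24\cdot12=288>256$.
\item Lemma~\ref{lem:highdegree} with $H=K_{q-1}$ and $d=q-1\geq4$ then propagates the inequality to every $K_q$ with $q\geq5$.
\end{itemize}
You are right that $K_4$ needs separate treatment, since Lemma~\ref{lem:highdegree} requires $d\geq4$.

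There is no circularity. Lemma~\ref{lem:highdegree} rests only on Lemma~\ref{lem:clique-gains}, which in turn uses Lemmas~\ref{lem:attachment-tree-comparison} and~\ref{lem:two-complete-tree}, and none of these uses Lemma~\ref{lem:complete}. The hypothesis $\alpha^*(H)>0$ needed for \eqref{eq:clique-c} follows from $K_{q-1}$ satisfying the inequality.

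The two routes trade off as follows. The paper's citation is shorter. Your argument shows that the external result is unnecessary: the complete case in the proof of Theorem~\ref{thm:2connected} already follows from tools proved in the paper.

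Be careful with your closing alternative. The paper's introduction only says that \cite{CM} verified ``the conjecture'' for threshold graphs, which need not mean the multiplicative form. That is presumably why the paper cites the specific Lemma~3.4 of \cite{CM} rather than the threshold-graph result.
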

\begin{proof}
For $q\geq4$, the assertion follows from Conde and Merino~\cite[Lemma~3.4]{CM} by taking $n=q$ and $m=0$. For $q=3$, we have $\alpha(K_3)\alpha^*(K_3)=6\cdot2>3^2=\tau(K_3)^2$.
\end{proof}

\subsection{\texorpdfstring{Decorated $K_4$ graphs}{Decorated K4 graphs}}

We first attach an undecorated $K_4$ and then add the edge decorations successively. The following lemma controls the change in $\alpha(X)\alpha^*(X)/\tau(X)^2$ at each step.

\begin{lemma}\label{lem:decoration-ratio}
Let $X$ be a connected simple graph with $\alpha^*(X)>0$. Let $e=st\in E(X)$, and let $T_e$ be any nontrivial rooted $2$-tree with root edge $e$ and $V(X)\cap V(T_e)=\{s,t\}$. Put $Y=X\cup T_e$. Taking $s,t$ as the terminals of $X-e$, we have
\begin{align}
\frac{\alpha(Y)\alpha^*(Y)}{\tau(Y)^2} &\geq\frac{19}{20}\cdot \frac{\alpha(X)\alpha^*(X)}{\tau(X)^2} \quad\text{if }r(X-e)\geq1,
\label{eq:decoration-ratio-one}\\
\frac{\alpha(Y)\alpha^*(Y)}{\tau(Y)^2} &\geq\frac{35}{36}\cdot \frac{\alpha(X)\alpha^*(X)}{\tau(X)^2} \quad\text{if }r(X-e)\geq\frac{21}{19}.
\label{eq:decoration-ratio-outer}
\end{align}
\end{lemma}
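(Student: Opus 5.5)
We follow the proof of Lemma~\ref{lem:edgeattach}, but keep track of the loss instead of requiring none. Put $G_e=T_e-e$ with terminals $s,t$, so that $T_e=\widehat{G_e}$, and write $\rho=r(X-e)$ and $x=r(G_e)>0$.

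\textbf{Orientation side.} Lemma~\ref{lem:substitution} and Theorem~\ref{thm:interface} give
\[
\frac{\alpha(Y)\alpha^*(Y)}{\alpha(X)\alpha^*(X)}\geq\tau_2(G_e)^2\exp\!\left(\frac{4x}{5}\right),
\]
exactly as in \eqref{eq:edge-orientation-bound}. The hypothesis $\alpha^*(X)>0$ ensures that $X$ is bridgeless. Hence $X-e$ is connected and $\rho$ is defined.

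\textbf{Spanning tree side.} The computation \eqref{eq:edge-tree-ratio} applies verbatim and gives
\[
\frac{\tau(Y)}{\tau(X)}=\tau_2(G_e)\left(1+\frac{x}{1+\rho}\right).
\]
Dividing the two estimates, the factor $\tau_2(G_e)^2$ cancels. Everything reduces to the scalar inequality
\[
g_\rho(x):=\frac{\exp(4x/5)}{\bigl(1+x/(1+\rho)\bigr)^2}\geq c
\]
for all $x>0$. Here $c=19/20$ when $\rho\geq1$, and $c=35/36$ when $\rho\geq21/19$.

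\textbf{Minimizing over $x$.} The function $g_\rho(x)$ is increasing in $\rho$, so it suffices to treat $\rho=1$ and $\rho=21/19$. Put $b=1+\rho$. Then
\[
\frac{g'}{g}=\frac45-\frac{2}{b+x},
\]
so $g$ is minimized at $x=5/2-b$ when $b<5/2$. This is the case for both values of $\rho$. At the minimizer, $1+x/b=5/(2b)$, and the minimum equals
\[
\left(\frac{2b}{5}\right)^2\exp\!\left(\frac{4}{5}\left(\frac52-b\right)\right)=\frac{4b^2}{25}\exp\!\left(2-\frac{4b}{5}\right).
\]

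\textbf{Numerical check.} For $b=2$ the minimum is $\frac{16}{25}e^{2/5}\approx0.9548\geq0.95$. For $b=40/19$ it is $\frac{6400}{9025}\exp(2-32/19)=\frac{256}{361}e^{6/19}$. With $e^{6/19}\approx1.3714$, this is about $0.9725\geq35/36\approx0.9722$.

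The main obstacle is that this second margin is tiny. A rigorous lower bound for $e^{6/19}$, accurate to about $10^{-4}$, is needed, for example from a truncated Taylor series. A crude bound such as $e^y\geq1+y$ is far too weak here and would not suffice.
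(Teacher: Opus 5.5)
Your proof is correct. Up to the final scalar step it is the paper's argument: the orientation bound comes from Lemma~\ref{lem:substitution} and Theorem~\ref{thm:interface}, the spanning tree identity comes from $X=(X-e)\paral K_2$ and $Y=(X-e)\paral T_e$, and monotonicity in $r(X-e)$ lets you fix the two threshold values.

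The difference is the minimization over $x=r(G_e)$.

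\textbf{The paper's route.} It first notes that $G_e$ contains the path through the first vertex added on $e$. Lemma~\ref{lem:r-monotonicity} then gives $x\ge r(P_3)=1/2$. On $[1/2,\infty)$ one has $1+\lambda+x\ge 5/2$, so the function is nondecreasing there, and the bound is its value at $x=1/2$, namely $e^{2/5}\bigl(2(1+\lambda)/(3+2\lambda)\bigr)^2$. For $\lambda=21/19$ this equals $(80/99)^2e^{2/5}\approx0.9742$, so the crude estimate $e^{2/5}>1.49$ already beats $35/36$.

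\textbf{Your route.} You minimize over all $x>0$. For $\rho=1$ the unconstrained minimizer is exactly $x=1/2$, so nothing is lost. For $\rho=21/19$ the minimizer $x=15/38$ lies below $1/2$, and you get only $\frac{256}{361}e^{6/19}\approx0.97248$. This exceeds $35/36$ by about $2.5\times10^{-4}$.

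\textbf{What each buys.} Your argument is slightly more general: it needs only that $G_e$ is a connected two-terminal graph satisfying \eqref{eq:interface}, with no lower bound on $r(G_e)$. The price is the delicate numerics you flagged, which you have not yet carried out. The one-line observation $r(G_e)\ge 1/2$ is exactly what lets the paper avoid them.

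\textbf{Completing your check.} You need $e^{6/19}\ge 12635/9216\approx1.370985$. All Taylor terms are positive, so any truncation is a rigorous lower bound. The degree-four Taylor polynomial at $y=6/19$ equals $178711/19^4\approx1.371314$, which suffices; cross-multiplying gives $178711\cdot 9216>12635\cdot 19^4$. The degree-three truncation, $178657/19^4\approx1.370900$, falls just short.
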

\begin{proof}
Since $\alpha^*(X)>0$, the graph $X$ has no bridges, so $X-e$ is connected. Put $G_e=T_e-e$, with terminals $s,t$. By Theorem~\ref{thm:interface}, $G_e$ is connected and satisfies \eqref{eq:interface}. In a construction of $T_e$ starting from $e$, the first added vertex is adjacent to both $s$ and $t$. Thus $G_e$ contains a path $P_3$ with ends $s,t$, and Lemma~\ref{lem:r-monotonicity} gives
\begin{equation}\label{eq:nontrivial-decoration-r}
r(G_e)\geq r(P_3)=\frac12.
\end{equation}

By Lemma~\ref{lem:substitution} and \eqref{eq:interface},
\begin{equation}\label{eq:decoration-orientation-ratio}
\frac{\alpha(Y)\alpha^*(Y)}{\alpha(X)\alpha^*(X)}
\geq\frac{\bigl(\alpha(G_e)+\alpha_2(G_e)\bigr)\alpha_2^*(G_e)}2
\geq\tau_2(G_e)^2\exp\!\left(\frac{4r(G_e)}5\right).
\end{equation}
Since $X=(X-e)\paral K_2$ and $Y=(X-e)\paral T_e$, Lemma~\ref{lem:parallel-composition}, \eqref{eq:terminal-r} and \eqref{eq:restore-tree} yield
\begin{equation}\label{eq:decoration-tree-ratio}
\begin{aligned}
\frac{\tau(Y)}{\tau(X)}
&=\frac{\tau(X-e)\tau_2(T_e)+\tau_2(X-e)\tau(T_e)}
        {\tau(X-e)+\tau_2(X-e)}\\
&=\frac{\tau_2(X-e)\tau_2(G_e)
        \bigl(1+r(X-e)+r(G_e)\bigr)}
        {\tau_2(X-e)\bigl(1+r(X-e)\bigr)}\\
&=\tau_2(G_e)\left(1+\frac{r(G_e)}{1+r(X-e)}\right).
\end{aligned}
\end{equation}

Put $x=r(G_e)$, so $x\geq1/2$ by \eqref{eq:nontrivial-decoration-r}. For any fixed $\lambda$ satisfying $1\leq\lambda\leq r(X-e)$, define
\[
f_\lambda(z)=\frac{\exp(4z/5)}{\left(1+z/(1+\lambda)\right)^2}
\qquad(z\geq1/2).
\]
Combining \eqref{eq:decoration-orientation-ratio} and \eqref{eq:decoration-tree-ratio}, and using $\lambda\leq r(X-e)$, we obtain
\begin{equation}\label{eq:decoration-ratio-exponential}
\begin{aligned}
\frac{\alpha(Y)\alpha^*(Y)/\tau(Y)^2}
     {\alpha(X)\alpha^*(X)/\tau(X)^2}\geq
\frac{\exp(4x/5)}
     {\left(1+x/(1+r(X-e))\right)^2}\geq f_\lambda(x).
\end{aligned}
\end{equation}

For $z\geq1/2$ and $\lambda\geq 1$, we have $1+\lambda+z\geq5/2$. Hence
\[
f_\lambda'(z)
=f_\lambda(z)\left(\frac45-\frac{2}{1+\lambda+z}\right)\geq0.
\]
Thus $f_\lambda$ is nondecreasing on $[1/2,\infty)$. Since $x\geq1/2$, it follows that
\[
f_\lambda(x)\geq f_\lambda(1/2)
=\exp\!\left(\frac25\right)
 \left(\frac{2(1+\lambda)}{3+2\lambda}\right)^2>\frac{149}{100} \left(\frac{2(1+\lambda)}{3+2\lambda}\right)^2.
\]
Taking $\lambda=1$ when $r(X-e)\geq1$ and $\lambda=21/19$ when $r(X-e)\geq21/19$, we obtain
\begin{align*}
f_1(x)
&>\frac{149}{100}\left(\frac45\right)^2
=\frac{596}{625}>\frac{19}{20},\\
f_{21/19}(x)
&>\frac{149}{100}\left(\frac{80}{99}\right)^2
=\frac{9536}{9801}>\frac{35}{36}.
\end{align*}
Together with \eqref{eq:decoration-ratio-exponential}, these inequalities prove \eqref{eq:decoration-ratio-one} and \eqref{eq:decoration-ratio-outer}, respectively.
\end{proof}

\begin{remark}\label{rem:decoration-positivity}
Under the hypotheses of Lemma~\ref{lem:decoration-ratio}, $\alpha^*(Y)>0$. Thus the positivity assumption is preserved when edge decorations are added successively.
\end{remark}

We next consider an attachment along an edge. For the small graphs below, we compute the orientation counts by hand and the spanning tree counts using the Matrix--Tree Theorem.

\begin{proposition}\label{prop:K4edge}
Let $H$ be a connected simple graph satisfying the multiplicative Merino--Welsh inequality, and let $uv\in E(H)$ lie in a triangle of $H$. Let $G$ be obtained by attaching a complete graph $K_4$ to $H$ along $uv$ and then attaching an edge decoration along each edge in a subset of $E(K_4)\setminus\{uv\}$. Then $G$ also satisfies the multiplicative Merino--Welsh inequality.
\end{proposition}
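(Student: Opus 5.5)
Let $K_4$ have vertex set $\{u,v,w,z\}$ with $V(H)\cap V(K_4)=\{u,v\}$, and put $H'=H\cup K_4$. The plan has two stages. First, show that $H'$ satisfies the multiplicative inequality with a definite surplus factor $\rho$:
\[
\frac{\alpha(H')\alpha^*(H')}{\tau(H')^2}\geq\rho\,\frac{\alpha(H)\alpha^*(H)}{\tau(H)^2},
\]
where $\rho>1$ is large enough to absorb the losses caused by decorations. Second, add the decorations one edge at a time and control each step by Lemma~\ref{lem:decoration-ratio}. Combining decorations on the same edge, at most five decorations are needed, one on each of $uw,uz,vw,vz,wz$. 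Remark~\ref{rem:decoration-positivity} keeps $\alpha^*>0$ throughout.

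For the first stage, apply Lemma~\ref{lem:clique-gains} with $a=2$, $p=3$ and $q=4$. This is legitimate because $uv$ lies in a triangle of $H$, and $\alpha^*(H)>0$ since $H$ satisfies the inequality. It gives
\[
\frac{\alpha(H')}{\alpha(H)}=12,\qquad
\frac{\alpha^*(H')}{\alpha^*(H)}\geq(2^2-2)(2^3-2)=12,\qquad
\frac{\tau(H')}{\tau(H)}\leq2\cdot4\cdot\frac53=\frac{40}3.
\]
Hence $\rho\geq144\cdot9/1600=81/100$, which is below $1$. So the crude bound fails, and this is the main obstacle. I would improve both sides. For $\alpha^*$, count directly the extensions of a fixed strongly connected orientation of $H$ to the five new edges. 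Adding $w$ with two edges gives $2$ choices. Adding $z$, with edges to $u,v,w$, gives the $6$ mixed choices, plus further choices in which $z$ is all-in or all-out while $w$ still lies on a directed cycle through $z$. More simply, I would enumerate the totally cyclic orientations of the rooted piece $K_4$ glued along $uv$, in the style of Lemma~\ref{lem:substitution}. Treat $K_4-uv$ as a two-terminal graph $G'$ with terminals $u,v$ and compute its parameter vector $\mathbf v(G')$ by hand; note $r(G')=1$ by Lemma~\ref{lem:clique-response}. Then $\alpha(H')=\alpha(H)(\alpha(G')+\alpha_2(G'))/2$ exactly, and $\alpha^*(H')\geq\alpha^*(H)\alpha_2^*(G')$. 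For $\tau$, use the exact formula \eqref{eq:edge-tree-ratio}, $\tau(H')/\tau(H)=\tau_2(G')\bigl(1+r(G')/(1+r(H-uv))\bigr)$, together with $r(H-uv)\geq r(P_3)=1/2$ from the triangle and Lemma~\ref{lem:r-monotonicity}. This gives $\tau(H')/\tau(H)\leq\tau_2(G')\cdot5/3=8\cdot5/3$, since $\tau_2(G')=\tau(K_4/\{u,v\})=8$. The true gain $\alpha_2^*(G')(\alpha(G')+\alpha_2(G'))/2$ should exceed $(40/3)^2\approx178$ comfortably. I would check this by explicit counting and expect a margin large enough for the second stage.

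For the second stage, each decorated edge $e$ of $K_4$ other than $uv$ needs a lower bound on $r(X-e)$ in the current graph $X$. By Lemma~\ref{lem:r-monotonicity}, $X-e\supseteq K_4-e$, so $r(X-e)\geq r(K_4-e)=1$ by Lemma~\ref{lem:clique-response}. This gives a factor at least $19/20$ by \eqref{eq:decoration-ratio-one}. For edges incident to $u$ or $v$, such as $e=uw$, the graph $X-e$ also contains a path through $H$. Concretely, $K_4-uw$ together with the third vertex $y$ of a triangle $uvy$ of $H$ should give $r\geq21/19$, improving the factor to $35/36$ via \eqref{eq:decoration-ratio-outer}. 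The total loss is then at least $(19/20)(35/36)^4$ or, in the worst case, $(19/20)^5\approx0.77$. So it suffices that the first stage yields $\rho\geq(20/19)^5\approx1.30$, or $\rho\geq(20/19)(36/35)^4$ if the sharper bound on the four outer edges is verified. The concluding computation compares the hand-computed $\rho$ with this threshold. If it falls short, the fallback is to use the decoration-specific gain \eqref{eq:edge-orientation-bound} with the actual $r$ values in place of the uniform lemma.
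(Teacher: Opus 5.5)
Your plan is the paper's proof. It uses the same orientation gain $12\,\alpha_2^*(K_4-uv)$ from Lemma~\ref{lem:substitution}, and the same tree bound $40/3$: you get it from \eqref{eq:edge-tree-ratio} with $r(K_4-uv)=1$ and $r(H-uv)\geq r(P_3)=1/2$, while the paper gets it from Lemma~\ref{lem:clique-gains}. It also applies Lemma~\ref{lem:decoration-ratio} with resistance bounds taken from $K_4$ together with a triangle $uvy$ of $H$. As written, however, the proposal stops at the two computations that decide whether the argument closes, and you leave both as expectations (``should exceed'', ``should give''). They cannot be skipped, because the final bound has only about three percent to spare. Your stated fallback with the factor $(19/20)^5$ fails outright, since $\frac{243}{200}<\left(\frac{20}{19}\right)^5$.

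The first missing fact is $\alpha_2^*(K_4-uv)=18$. The quickest route is \eqref{eq:restore-b}, which gives $\alpha_2^*(K_4-uv)=\alpha^*(K_4)-\alpha^*(K_4-uv)=24-6$. Hence $\rho\geq 216\cdot 9/1600=243/200$. The value $17$ would already be too small, since you need $12\,\alpha_2^*(K_4-uv)\geq(40/3)^2(20/19)(36/35)^4\approx 209.5$.

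The second missing fact concerns $J_0=K_4\cup H[\{u,v,y\}]$. The harmonic potential on $J_0-uw$ with value $1$ at $u$ and $0$ at $w$ takes the values $11/19$, $10/19$, $15/19$ at $v$, $z$, $y$. The current leaving $u$ is therefore $8/19+9/19+4/19$, so $r(J_0-uw)=21/19$. By the symmetries $u\leftrightarrow v$ and $w\leftrightarrow z$ of $J_0$, the same holds for $uz,vw,vz$. This is exactly the threshold in \eqref{eq:decoration-ratio-outer}, while for $wz$ only $r\geq1$ is available. With these two facts, $\frac{243}{200}\left(\frac{35}{36}\right)^4\frac{19}{20}=\frac{228095}{221184}>1$ completes the proof.
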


\begin{proof}
Let $J=H\cup K_4$ be the graph before the decorations are attached, and write $V(K_4)=\{u,v,x,y\}$. Lemma~\ref{lem:clique-gains}, with $a=2$, $p=3$ and $q=4$, gives
\[
\frac{\alpha(J)}{\alpha(H)}=12,
\qquad
\frac{\tau(J)}{\tau(H)}\leq\frac{40}{3}.
\]
Put $D=K_4-uv$, with terminals $u,v$. A direct count gives $\alpha_2^*(D)=18$. Since $J=H\cup K_4=H\cup\widehat D$ and $\alpha^*(H)>0$, Lemma~\ref{lem:substitution} yields
\[
\frac{\alpha^*(J)}{\alpha^*(H)}\geq18.
\]
Consequently,
\begin{equation}\label{eq:K4edge-initial-ratio}
\frac{\alpha(J)\alpha^*(J)}{\tau(J)^2}
\geq\frac{216}{(40/3)^2}
\frac{\alpha(H)\alpha^*(H)}{\tau(H)^2}
=\frac{243}{200}
\frac{\alpha(H)\alpha^*(H)}{\tau(H)^2}.
\end{equation}

Choose a triangle $H[\{u,v,w\}]$ containing $uv$ and put $J_0=H[\{u,v,w\}]\cup K_4$. Thus
\[
V(J_0)=\{u,v,x,y,w\},
\qquad
E(J_0)=\{uv,ux,vx,uy,vy,xy,uw,vw\}.
\]
Equation~\eqref{eq:clique-tree-ratio}, with $p=3$, $q=4$ and $a=2$, gives $\tau(J_0)=40$. Taking $(u,x)$ and $(x,y)$ as the terminal pairs of $J_0-ux$ and $J_0-xy$, respectively, \eqref{spanning_forest_number} and a direct calculation give
\[
\tau_2(J_0-ux)
=\tau\bigl((J_0-ux)/\{u,x\}\bigr)
=19,\qquad
\tau_2(J_0-xy)
=\tau\bigl((J_0-xy)/\{x,y\}\bigr)
=20.
\]
For each $e\in\{ux,vx,uy,vy,xy\}$, take its ends as the terminals of $J_0-e$. By \eqref{eq:restore-tree}, $\tau(J_0-e)=\tau(J_0)-\tau_2(J_0-e)$. By \eqref{eq:terminal-r} and symmetry among the edges $ux,vx,uy,vy$, we obtain
\begin{equation}\label{eq:K4edge-deleted-r}
r(J_0-e)=\frac{40-19}{19}=\frac{21}{19}
\quad(e\in\{ux,vx,uy,vy\}),
\qquad
r(J_0-xy)=\frac{40-20}{20}=1.
\end{equation}

Now attach the decorations successively. At each step, let $X$ be the current graph and $e$ the edge to be decorated, with the ends of $e$ as terminals of $X-e$. Since $\alpha^*(J)>0$ and this positivity is preserved by Remark~\ref{rem:decoration-positivity}, we have $\alpha^*(X)>0$, so $X-e$ is connected. The graph $X-e$ contains $J_0-e$, so Lemma~\ref{lem:r-monotonicity} and \eqref{eq:K4edge-deleted-r} give $r(X-e)\geq21/19$ if $e\in\{ux,vx,uy,vy\}$,
and $r(X-e)\geq1$ if $e=xy$. Thus \eqref{eq:decoration-ratio-outer} applies at most four times and \eqref{eq:decoration-ratio-one} at most once. Combining these bounds with \eqref{eq:K4edge-initial-ratio} and the assumed inequality for $H$, we obtain
\[
\frac{\alpha(G)\alpha^*(G)}{\tau(G)^2}\geq\frac{243}{200}
\left(\frac{35}{36}\right)^4\frac{19}{20}
\frac{\alpha(H)\alpha^*(H)}{\tau(H)^2}\geq\frac{243}{200}
\left(\frac{35}{36}\right)^4\frac{19}{20}
=\frac{228095}{221184}>1.\qedhere
\]
\end{proof}

We next consider an attachment along a triangle.

\begin{proposition}\label{prop:K4triangle}
Let $H$ be a connected simple graph satisfying the multiplicative Merino--Welsh inequality, and let $S=\{s_1,s_2,s_3\}$ be a clique contained in another clique of $H$ of order at least four. Let $G$ be obtained by attaching a complete graph $K_4$ to $H$ along $S$ and then attaching an edge decoration along each edge in a subset of $E(K_4)\setminus E(K_4[S])$.
Then $G$ also satisfies the multiplicative Merino--Welsh inequality.
\end{proposition}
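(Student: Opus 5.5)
The plan mirrors Proposition~\ref{prop:K4edge}: attach an undecorated $K_4$, bound the gain in $\alpha\alpha^*/\tau^2$, and then pay for each decoration with Lemma~\ref{lem:decoration-ratio}. Write $V(K_4)=S\cup\{x\}$ and $J=H\cup K_4$. The decorated edges lie among $xs_1,xs_2,xs_3$, so there are at most three decorations.

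\textbf{Step 1: the undecorated attachment.} Apply Lemma~\ref{lem:clique-gains} with $a=3$, $p=4$, $q=4$. This gives
\[
\frac{\alpha(J)}{\alpha(H)}=4,\qquad \frac{\alpha^*(J)}{\alpha^*(H)}\geq 2^3-2=6,\qquad \frac{\tau(J)}{\tau(H)}\leq 3\left(\tfrac54\right)^2=\tfrac{75}{16}.
\]
Hence
\[
\frac{\alpha(J)\alpha^*(J)}{\tau(J)^2}\geq \frac{24\cdot 256}{5625}\cdot\frac{\alpha(H)\alpha^*(H)}{\tau(H)^2}=\frac{6144}{5625}\cdot\frac{\alpha(H)\alpha^*(H)}{\tau(H)^2}.
\]
We also get $\alpha^*(J)>0$, so Remark~\ref{rem:decoration-positivity} keeps positivity, and therefore connectivity of $X-e$, at every later step.

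\textbf{Step 2: a lower bound on $r(X-e)$.} Choose a clique $S\cup\{w\}$ of $H$ of order four, which exists by hypothesis. Put $J_0=H[S\cup\{w\}]\cup K_4$; this is $K_5$ minus the edge $xw$. For $e=xs_1$ (the other two edges follow by symmetry), compute $r(J_0-e)$ from the Dirichlet principle \eqref{eq:r-variational} with $z_x=1$ and $z_{s_1}=0$. By symmetry set $z_{s_2}=z_{s_3}=a$ and $z_w=b$. The energy is
\[
2(1-a)^2+2a^2+b^2+2(a-b)^2.
\]
Minimizing in $b$ gives $b=2a/3$, and then minimizing in $a$ gives $a=3/7$. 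The resulting value is $r(J_0-e)=8/7$; as a cross-check, $\tau(J_0)=75$ from \eqref{eq:clique-tree-ratio}. Since $8/7>21/19$, every current graph $X$ contains $J_0-e$, and Lemma~\ref{lem:r-monotonicity} gives $r(X-e)\geq 21/19$ for each decorated edge $e$.

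\textbf{Step 3: the decorations.} By \eqref{eq:decoration-ratio-outer}, each of the at most three decorations loses at most a factor $35/36$. Combining with Step 1 and the assumed inequality for $H$,
\[
\frac{\alpha(G)\alpha^*(G)}{\tau(G)^2}\geq \frac{6144}{5625}\left(\frac{35}{36}\right)^3\approx 1.0038>1.
\]

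The main obstacle is this thin final margin. It depends on verifying the sharp value $8/7$, rather than a cruder bound such as $r\geq1$. With only $r\geq 1$, the factor $19/20$ from \eqref{eq:decoration-ratio-one} applied three times gives a product below $1$, so the argument would fail. I would therefore check the minimization in Step 2 carefully, or confirm it with the Matrix--Tree Theorem via $\tau_2(J_0-xs_1)=35$, which gives $r=(75-35)/35=8/7$.
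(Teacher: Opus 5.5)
Your proposal is correct and follows the paper's proof step for step: the same initial factor $6144/5625$ from Lemma~\ref{lem:clique-gains}, the same auxiliary graph $J_0=H[S\cup\{w\}]\cup K_4$ giving $r(X-e)\geq 8/7>21/19$ for each decorated edge, and at most three applications of \eqref{eq:decoration-ratio-outer}, yielding $10976/10935>1$. The only cosmetic difference is that you get $8/7$ by minimizing the Dirichlet energy directly; the reduction to symmetric potentials is valid because the energy is convex. The paper instead obtains $8/7$ from $\tau(J_0)=75$ and $\tau_2(J_0-vs_1)=35$, which is exactly the cross-check you mention.
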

\begin{proof}
Let $J=H\cup K_4$ be the graph before the decorations are attached, and write $V(K_4)=S\cup\{v\}$. Lemma~\ref{lem:clique-gains}, with $a=3$, $p=4$ and $q=4$, gives
\[
\frac{\alpha(J)\alpha^*(J)}{\alpha(H)\alpha^*(H)}
\geq4(2^3-2)=24,
\qquad
\frac{\tau(J)}{\tau(H)}\leq\frac{75}{16}.
\]
Hence
\begin{equation}\label{eq:K4triangle-initial-ratio}
\frac{\alpha(J)\alpha^*(J)}{\tau(J)^2}
\geq\frac{24}{(75/16)^2}
\frac{\alpha(H)\alpha^*(H)}{\tau(H)^2}
=\frac{6144}{5625}
\frac{\alpha(H)\alpha^*(H)}{\tau(H)^2}.
\end{equation}

Choose $w\in V(H)\setminus S$ such that $H[S\cup\{w\}]$ is complete, and put $J_0=H[S\cup\{w\}]\cup K_4$.
Equation~\eqref{eq:clique-tree-ratio}, with $p=q=4$ and $a=3$, gives $\tau(J_0)=75$. Taking $v,s_1$ as the terminals of $J_0-vs_1$, \eqref{spanning_forest_number} and a direct calculation give $\tau_2(J_0-vs_1)=\tau\bigl((J_0-vs_1)/\{v,s_1\}\bigr)=35$. By \eqref{eq:restore-tree}, \eqref{eq:terminal-r} and symmetry among the edges $vs_1,vs_2,vs_3$, we obtain
\begin{equation}\label{eq:K4triangle-deleted-r}
r(J_0-vs_i)=\frac{75-35}{35}
=\frac87>\frac{21}{19}
\qquad(i=1,2,3).
\end{equation}

Now attach the decorations successively. At the step when $vs_i$ is decorated, let $X$ be the current graph. By \eqref{eq:K4triangle-initial-ratio} and Remark~\ref{rem:decoration-positivity}, we have $\alpha^*(X)>0$,
so $X-vs_i$ is connected. We take $v$ and $s_i$ as the terminals of $X-vs_i$. Since $J_0-vs_i\subseteq X-vs_i$, Lemma~\ref{lem:r-monotonicity} and \eqref{eq:K4triangle-deleted-r} give $r(X-vs_i)\geq8/7>21/19$. Thus \eqref{eq:decoration-ratio-outer} applies at most three times. Combining these bounds with
\eqref{eq:K4triangle-initial-ratio} and the assumed inequality for $H$, we obtain
\[
\frac{\alpha(G)\alpha^*(G)}{\tau(G)^2}
\geq\frac{6144}{5625}
\left(\frac{35}{36}\right)^3
\frac{\alpha(H)\alpha^*(H)}{\tau(H)^2}\geq\frac{6144}{5625}
\left(\frac{35}{36}\right)^3
=\frac{10976}{10935}>1.\qedhere
\]
\end{proof}

The following lemma combines the preceding results for complete graph attachments with edge decorations.

\begin{lemma}\label{lem:leafattach}
Let $H$ be a  connected simple graph satisfying the multiplicative Merino--Welsh inequality. Let $S$ be a clique of order $a\geq2$ properly contained in a  clique $P$ of $H$. Let $G$ be obtained by attaching a complete graph $K_q$ to $H$ along $S$, where $q>a$ and $q\geq4$, and then attaching an edge decoration along each edge
in a subset of $E(K_q)\setminus E(K_q[S])$. Then $G$ also satisfies the multiplicative Merino--Welsh inequality.
\end{lemma}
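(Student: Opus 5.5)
The plan is to split according to $q$ and then to $a$ when $q=4$, reducing each case to a result already proved. Throughout, $P$ is a clique of $H$ properly containing $S$, so $|P|\geq a+1$.

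\textbf{Case $q\geq5$.} Since $P\supsetneq S$, Lemma~\ref{lem:largeclique} applies with $p=|P|$. Hence $J=H\cup K_q$ satisfies the multiplicative Merino--Welsh inequality. Now add the edge decorations one at a time. Each decorated edge $e$ lies in $E(K_q)$, so both ends of $e$ lie in the clique $V(K_q)$ of order $q\geq5$. This remains a clique in every intermediate graph, because decorations only add new vertices and edges and never remove any. Thus Corollary~\ref{cor:large-edge} applies at each step and preserves the inequality. This yields the conclusion for $G$.

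\textbf{Case $q=4$.} Then $a\in\{2,3\}$, since $a<q$.
\begin{itemize}
\item If $a=2$, write $S=\{u,v\}$. Because $S\subsetneq P$, some vertex $w\in P\setminus S$ is adjacent to both $u$ and $v$, so $uv$ lies in a triangle of $H$. The decorated edges form a subset of $E(K_4)\setminus\{uv\}$, so Proposition~\ref{prop:K4edge} gives the result directly.
\item If $a=3$, then $|P|\geq4$, so $S$ is contained in another clique of $H$ of order at least four. The decorated edges lie in $E(K_4)\setminus E(K_4[S])$, so Proposition~\ref{prop:K4triangle} applies.
\end{itemize}

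There is one point to check: \emph{simplicity and connectedness.} Each graph fed to the lemmas above must be connected and simple. The graph $H\cup K_q$ is connected and simple since $S\neq\emptyset$ and $V(H)\cap V(K_q)=S$. Adding a rooted $2$-tree along an existing edge keeps the graph simple, because the root edge is identified with that edge and no other parallel edge is created.

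The main potential obstacle is the case $q\geq5$. Corollary~\ref{cor:large-edge} only needs a clique of order at least five containing both ends of the edge, and that clique survives all decorations. So I expect the whole proof to be a short case dispatch with no new estimates.
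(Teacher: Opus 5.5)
Your proposal is correct and follows the paper's proof essentially verbatim. For $q\geq5$ you apply Lemma~\ref{lem:largeclique} and then Corollary~\ref{cor:large-edge} successively, using that the attached $K_q$ persists; for $q=4$ you use $|P|\geq a+1$ to send $a=2$ and $a=3$ to Propositions~\ref{prop:K4edge} and~\ref{prop:K4triangle}, and your simplicity and connectedness checks are bookkeeping the paper leaves implicit.
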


\begin{proof}
If $q\geq5$, Lemma~\ref{lem:largeclique} shows that $H\cup K_q$ satisfies the multiplicative Merino--Welsh inequality. Every edge to be decorated belongs to the attached $K_q$, which remains a subgraph throughout the construction. Hence Corollary~\ref{cor:large-edge} applies successively.

If $q=4$, then $a\in\{2,3\}$. Since $S\subsetneq P$, we have $|P|\geq a+1$. Thus Propositions~\ref{prop:K4edge} and~\ref{prop:K4triangle} apply when $a=2$ and $a=3$, respectively. This completes the proof.
\end{proof}

\section{Proof of the main theorem}\label{sec:mainproof}

In this section, all graphs are simple. For a $2$-connected chordal graph $G$, put
\[
\ell(G)=\bigl|\{C\in\mathcal C(G):|C|\geq4\}\bigr|.
\]
Every maximal clique of $G$ has order at least three. Indeed, if $G$ has more than one maximal clique, each maximal clique properly contains its intersection with a neighboring clique, which has at least two vertices by Lemma~\ref{lem:separators}. Otherwise, $G$ is complete and has at least three vertices by $2$-connectedness. Thus every maximal clique not counted by $\ell(G)$ is a triangle. We first establish the structural decomposition needed for induction on $\ell(G)$.

\begin{lemma}\label{lem:leafdecomposition}
Let $G$ be a noncomplete $2$-connected chordal graph with $\ell(G)\geq1$. Then there exist a $2$-connected chordal graph $H$ with $\ell(H)=\ell(G)-1$ and a clique $S$ of $H$ with $|S|\geq2$, properly contained in a maximal clique of $H$. The graph $G$ is obtained by attaching a complete graph $K_q$ to $H$ along $S$, where $q\geq4$ and $q>|S|$, and then attaching an edge decoration along each edge in a subset of $E(K_q)\setminus E(K_q[S])$.
\end{lemma}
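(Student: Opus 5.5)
Fix a clique tree $\mathcal T$ of $G$ and consider the subtree structure of the big cliques (order at least four) versus the triangles. The plan is to find a ``leaf'' big clique $C$: a maximal clique with $|C|=q\geq4$ such that, after removing $C$ together with all triangles hanging off $C$ away from the rest of the big cliques, the remaining graph $H$ still contains every other big clique. Since $G$ is noncomplete, $\mathcal T$ has at least two nodes. Root $\mathcal T$ at a node chosen as follows. If $\ell(G)\geq2$, root at one big clique and let $C$ be a big clique of maximal depth. If $\ell(G)=1$, root at any node adjacent to the unique big clique $C$, which exists because $\mathcal T$ has at least two nodes. In either case the subtree $\mathcal T_C$ of descendants of $C$ (including $C$) contains no other big clique, so all its other nodes are triangles.

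Let $P$ be the parent of $C$, put $S=C\cap P$, and let $V_1$ and $V_2$ be the vertex sets from Lemma~\ref{lem:clique-tree-cut} for the edge $CP$, with $V_1$ on the side of $C$. Define $H=G[V_2]$. By Lemma~\ref{lem:separators}, $|S|\geq2$ and $S\subsetneq P$, and also $S\subsetneq C$, so $q>|S|$. The graph $H$ is chordal as an induced subgraph. The component of $\mathcal T-CP$ containing $P$ is a clique tree of $H$, and its nodes are exactly the maximal cliques of $H$: each is maximal in $G$, and by Lemma~\ref{lem:clique-tree-cut} no new clique arises. Hence $P$ is a maximal clique of $H$ properly containing $S$, and $\ell(H)=\ell(G)-1$. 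For $2$-connectedness of $H$, I would argue that $H$ is obtained by iterated gluings of maximal cliques of order at least three along separators of size at least two, following the order of a rooted clique tree. Lemma~\ref{lem:gluing} then applies inductively. When $H$ is a single clique $P$, it has $|P|\geq3$.

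Next, $G[V_1]$ is obtained from $K_q=G[C]$ by successively adding the triangles of $\mathcal T_C\setminus\{C\}$ in parent-before-child order. By the running-intersection property and Lemma~\ref{lem:separators}, each new triangle meets the previously built graph exactly in an edge of its parent, and adds one new vertex. Each triangle descends from a unique child of $C$, and that child meets $C$ in an edge $f$ of $C$. Group the triangles by the edge $f$ of $C$ through which their branch attaches. The triangles in one group, together with $f$, form a rooted $2$-tree with root edge $f$, as in the discussion after Definition~\ref{def:2tree}. Decorations on the same edge combine into one rooted $2$-tree, so each edge receives a single edge decoration.

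The remaining point is to check that no decorated edge lies in $K_q[S]$. This is the step I expect to be the main obstacle. Suppose a child triangle $D$ of $C$ met $C$ in an edge $f\subseteq S$. Its third vertex $z\in V_1\setminus S$ lies in $D$. The edge $f$ also lies in $P$, hence in $H$. This is not yet a contradiction: Lemma~\ref{lem:clique-tree-cut} permits it, and then the branch at $f$ would be an edge decoration along an edge of $S$. My first remedy is to re-choose the clique tree: reattach every such subtree from $C$ to $P$. This keeps the running-intersection property, since $D\cap C=f\subseteq P$, and it keeps all big cliques in place. I would justify this by choosing $\mathcal T$ at the outset to minimize the number of triangle-nodes in $\mathcal T_C$ meeting $C$ inside $S$. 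With this choice, every remaining decorated edge meets $V(K_q)\setminus S$.

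Finally, $G=H\cup K_q\cup\bigcup T_f$, with the vertex intersections $V(H)\cap V(K_q)=S$ and $V(T_f)\cap(\text{rest})=f$ guaranteed by Lemma~\ref{lem:clique-tree-cut} and the construction. This is exactly the required description.
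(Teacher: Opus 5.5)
Your proposal is correct and follows essentially the paper's route. You take a deepest clique $C$ of order at least four with parent $P$, set $S=C\cap P$, and peel off $C$ together with the rooted $2$-trees $T_Q$ hanging from its children. The only difference is cosmetic. The paper absorbs each $T_Q$ with $Q\cap C\subseteq S$ directly into $H$. You instead reattach those subtrees from $C$ to $P$, which preserves the running-intersection property because $Q\cap C\subseteq P$ and, by Lemma~\ref{lem:clique-tree-cut}, no other vertex of these triangles occurs outside $\mathcal T_Q$. This yields exactly the same $H$. Since the reattachment can be performed directly, your minimization over clique trees is not needed.
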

\begin{proof}
Root a clique tree \(\mathcal T\) of \(G\) so that at least one maximal clique of order at least four is distinct from the root.  Let $C$ be a maximal clique of order at least four at maximum distance from the root. Then $C$ has a parent $P$. Put $S=C\cap P$. By Lemma~\ref{lem:separators}, $|S|\geq2$, $S\subsetneq C$ and $S\subsetneq P$. By the choice of $C$, every proper descendant of $C$ corresponds to a maximal clique of order three.

For each child $Q$ of $C$, let $T_Q$ be the union of the triangle subgraphs corresponding to $Q$ and its descendants.
Lemma~\ref{lem:separators} gives $|Q\cap C|=2$; let $e_Q$ be the unique edge of $G[Q\cap C]$. By Lemma~\ref{lem:clique-tree-cut}, \(T_Q\) meets the union of the clique subgraphs corresponding to nodes outside the subtree rooted at \(Q\) precisely in the edge \(e_Q\). The construction following Definition~\ref{def:2tree} shows that
$T_Q$ is a nontrivial rooted $2$-tree with root edge $e_Q$. By Lemma \ref{lem:clique-tree-cut}, for distinct children $Q$ and $Q'$ of $C$, the sets $V(T_Q)\setminus C$ and $V(T_{Q'})\setminus C$ are disjoint and no edge of $G$ joins them.

Let $\mathcal T_P$ be the component of $\mathcal T-CP$ containing $P$, and let $H_P$ be the union of the complete subgraphs corresponding to its nodes. Define
\[
H=H_P\cup
\bigcup_{\substack{Q\text{ a child of }C\\Q\cap C\subseteq S}}T_Q.
\]
By Lemma~\ref{lem:clique-tree-cut} and the construction of $H$, $V(H_P)\cap C=V(H)\cap C=S$.

Add the clique subgraphs corresponding to the nodes of \(\mathcal T_P\) in an order in which each parent precedes its children. By Lemma 2.2, each clique after the first meets the preceding union precisely in its intersection with its parent. By Lemma 2.1, this intersection is a complete subgraph of order at least two. Since every clique subgraph has order at least three, repeated application of Lemma 2.3 shows that \(H_P\) is chordal and \(2\)-connected. Applying the same lemma to the selected rooted $2$-trees, whose root edges lie in $G[S]$, shows that $H$ is chordal and $2$-connected. Since $G[P]\subseteq H\subseteq G$, $P$ remains a maximal clique of $H$. Thus $S$ is properly contained in a maximal clique of $H$. Every clique of $H$ of order at least four is contained in $H_P$.
Note that any maximal clique of $H_P$ of order at least four that is not maximal in $G$ must be contained in $C$, hence in $S\subsetneq P$, contradicting its maximality. Thus the maximal cliques of $H_P$ of order at least four are exactly those of $G$ of order at least four other than $C$. Hence $\ell(H)=\ell(H_P)=\ell(G)-1$.

Put $q=|C|$, so $q\geq4$ and $q>|S|$. Attach $G[C]\cong K_q$ to $H$ along $S$. Each $T_Q$ not included in $H$ meets $H\cup G[C]$ exactly in its root edge and its ends, with at least one end outside $S$. Attach these rooted $2$-trees successively as edge decorations to recover $G$.
\end{proof}

\begin{theorem}\label{thm:2connected}
Every $2$-connected chordal graph satisfies the multiplicative Merino--Welsh inequality.
\end{theorem}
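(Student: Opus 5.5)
We argue by induction on $\ell(G)$, the number of maximal cliques of order at least four.

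\textbf{Base case $\ell(G)=0$.} Every maximal clique of $G$ is a triangle, as noted at the start of this section. By the discussion after Definition~\ref{def:2tree}, a $2$-connected chordal graph all of whose maximal cliques are triangles is a nontrivial $2$-tree. Corollary~\ref{cor:2tree} then gives $\alpha(G)\alpha^*(G)>\tau(G)^2$.

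\textbf{Induction step, $G$ complete.} Suppose $\ell(G)\geq1$ and the statement holds for all $2$-connected chordal graphs $G'$ with $\ell(G')<\ell(G)$. If $G$ is complete, then $G=K_q$ with $q\geq4$, and Lemma~\ref{lem:complete} applies directly.

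\textbf{Induction step, $G$ not complete.} Lemma~\ref{lem:leafdecomposition} supplies the following data:
\begin{itemize}
\item a $2$-connected chordal graph $H$ with $\ell(H)=\ell(G)-1$;
\item a clique $S$ of $H$ with $|S|\geq2$, properly contained in a maximal clique $P$ of $H$;
\item a number $q\geq4$ with $q>|S|$.
\end{itemize}
With these, $G$ is obtained by attaching $K_q$ to $H$ along $S$ and then adding edge decorations along some edges of $E(K_q)\setminus E(K_q[S])$. By the induction hypothesis, $H$ satisfies the multiplicative Merino--Welsh inequality. Being $2$-connected, $H$ is a connected simple graph. These are exactly the hypotheses of Lemma~\ref{lem:leafattach}, with $a=|S|$ and the clique $P$. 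That lemma therefore yields the inequality for $G$, completing the induction.

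\textbf{Main obstacle.} The only real work is to check that the hypotheses match. First, $H$ must satisfy the inequality, not merely be $2$-connected chordal; this is what the induction on $\ell$ provides. Second, $S$ must lie properly inside a clique of $H$; this is guaranteed by Lemma~\ref{lem:leafdecomposition}. Third, the decorated edges must avoid $E(K_q[S])$, which is again part of the decomposition. The substantive estimates were already proved in Propositions~\ref{prop:K4edge} and~\ref{prop:K4triangle} and in Lemma~\ref{lem:largeclique}, so the proof of the theorem reduces to assembling these pieces.
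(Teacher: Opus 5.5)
Your proposal is correct and follows the paper's proof essentially step for step. It uses the same induction on $\ell(G)$: the base case comes from Corollary~\ref{cor:2tree}, the complete case from Lemma~\ref{lem:complete}, and the noncomplete case from combining Lemma~\ref{lem:leafdecomposition} with Lemma~\ref{lem:leafattach}, with the hypotheses matched exactly as you describe.
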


\begin{proof}
We proceed by induction on $\ell(G)$. If $\ell(G)=0$, all maximal cliques of $G$ are triangles, so $G$ is a nontrivial $2$-tree by the characterization following Definition~\ref{def:2tree}. Corollary~\ref{cor:2tree} gives the multiplicative Merino--Welsh inequality for $G$.

Now let $\ell(G)=m\geq1$ and assume that every $2$-connected chordal graph with exactly $m-1$ maximal cliques of order at least four satisfies the inequality. If $G$ is complete, apply Lemma~\ref{lem:complete}. Otherwise, Lemma~\ref{lem:leafdecomposition} gives a $2$-connected chordal graph $H$ with $\ell(H)=m-1$ such that $G$ is obtained
from $H$ by attaching a complete graph and then edge decorations as described in that lemma. By the induction hypothesis, $H$ satisfies the multiplicative Merino--Welsh inequality. Applying Lemma~\ref{lem:leafattach} then gives the multiplicative Merino--Welsh inequality for $G$.
\end{proof}

Now we extend the result to connected bridgeless chordal graphs using block decomposition. A \emph{block} is a maximal $2$-connected subgraph, a bridge together with its ends, or an isolated vertex. Distinct blocks have disjoint edge sets and meet in at most one vertex, which is then a cut vertex. Every cycle lies in a single block. Every edge belongs to exactly one block.

The following standard product formulas also follow from the multiplicativity of the Tutte polynomial under one-point joins; see Ellis-Monaghan and Merino~\cite[Section~3.1]{EM}.

\begin{lemma}\label{lem:blocks}
Let $G$ be a connected graph with $|V(G)|\geq 2$, and let $G_1,\ldots,G_m$ be its blocks. Then
\[
\tau(G)=\prod_{i=1}^m\tau(G_i),
\qquad
\alpha(G)=\prod_{i=1}^m\alpha(G_i),
\qquad
\alpha^*(G)=\prod_{i=1}^m\alpha^*(G_i).
\]
\end{lemma}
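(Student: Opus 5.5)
The plan is to induct on the number $m$ of blocks, reducing everything to a one-point join. If $m=1$, then $G$ is a single block; since $|V(G)|\geq2$ and $G$ is connected, $G$ is either $2$-connected or a single edge. In either case the three identities are trivial. For $m\geq2$, the block--cut tree of $G$ has a leaf block $B$, which contains exactly one cut vertex $c$ of $G$. Put $G'$ for the union of the remaining blocks. Then $G'$ is connected, $V(B)\cap V(G')=\{c\}$, and $E(B)\cap E(G')=\emptyset$. The blocks of $G'$ are exactly $G_i$ with $G_i\neq B$. The key claim is therefore that each of $\tau,\alpha,\alpha^*$ is multiplicative over such a one-vertex gluing $G=B\cup G'$; the formulas then follow by induction.

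\textbf{Spanning trees.} A spanning subgraph $F$ of $G$ splits uniquely as $F=F_B\cup F_{G'}$ with $F_B\subseteq B$ and $F_{G'}\subseteq G'$. The graph $F$ is a spanning tree of $G$ if and only if both parts are spanning trees of their respective graphs. For the forward direction, suppose $F$ is a spanning tree. Any path in $F$ between two vertices of $B$ stays in $B$, because leaving $B$ requires passing through $c$ twice. Hence $F_B$ is connected, and it is acyclic as a subgraph of a tree; the same argument applies to $F_{G'}$. For the converse, gluing two trees at a single vertex gives a tree, by connectivity and the vertex/edge count $|V|-1$. This gives $\tau(G)=\tau(B)\tau(G')$.

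\textbf{Orientations.} Every cycle of $G$ lies within $B$ or within $G'$. The reason is that a cycle passes through $c$ at most once, so it cannot cross between the two sides: it would need two distinct crossing points in $V(B)\cap V(G')=\{c\}$. The same holds for directed cycles. So an orientation of $G$ is acyclic if and only if its restrictions to $B$ and $G'$ are both acyclic. Likewise, it is totally cyclic if and only if every edge lies on a directed cycle inside its own part, that is, if and only if both restrictions are totally cyclic. Orientations of $G$ correspond bijectively to pairs of orientations of $B$ and $G'$, which yields $\alpha(G)=\alpha(B)\alpha(G')$ and $\alpha^*(G)=\alpha^*(B)\alpha^*(G')$.

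The only mildly delicate point, and the main thing to check, is the structural fact that every cycle and every tree path between vertices of $B$ stays within one side of the cut vertex $c$. This is the standard property that two blocks meet in at most one vertex, which is recorded just before the lemma, and it drives all three identities. Alternatively, one could cite the multiplicativity $T_G=T_BT_{G'}$ of the Tutte polynomial under one-point joins and evaluate via \eqref{eq:tutte}. In that route one should note that $k(E)$ is additive minus one, so the exponents add correctly.
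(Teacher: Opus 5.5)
Your proof is correct, but it follows a different route from the paper. The paper gives no argument of its own. It calls these product formulas standard and derives them from the multiplicativity of the Tutte polynomial under one-point joins, evaluated at $(1,1)$, $(2,0)$ and $(0,2)$ via \eqref{eq:tutte}.

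You instead peel off a leaf block $B$ at its unique cut vertex $c$ and give explicit bijections. Spanning trees of $G$ correspond to pairs of spanning trees of $B$ and $G'$, and orientations of $G$ correspond to pairs of orientations. The only fact used is that no path or cycle can pass through the single shared vertex $c$ twice.

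Your version is self-contained and elementary. In particular, it does not rely on the evaluation theorems $\alpha(G)=T_G(2,0)$ and $\alpha^*(G)=T_G(0,2)$. The paper's citation is shorter and treats all three counts, indeed every Tutte evaluation, uniformly, at the cost of resting on that machinery.

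The alternative you sketch at the end is exactly the paper's route. The precise fact needed there is $k(A)=k(A\cap E(B))+k(A\cap E(G'))-1$ for every $A\subseteq E(G)$, not merely for $A=E(G)$. Together with $|V(G)|=|V(B)|+|V(G')|-1$, this makes both exponents in the defining sum additive.
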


\vskip 0.25cm
\begin{proof}[\textbf{Proof of Theorem~\ref{thm:main}}]
The case $G=K_1$ follows from the stated convention. Otherwise, let $G_1,G_2,\ldots,G_m$ be the blocks of $G$. Since $G$ is connected, has at least two vertices and has no bridges, every $G_i$ is $2$-connected. Each $G_i$ is chordal, being an induced subgraph of $G$. By Theorem~\ref{thm:2connected} and Lemma~\ref{lem:blocks}, we have
\[
\alpha(G)\alpha^*(G)
=\prod_{i=1}^m\alpha(G_i)\alpha^*(G_i)
\geq\prod_{i=1}^m\tau(G_i)^2
=\tau(G)^2.
\]
The additive inequality follows from the arithmetic--geometric mean inequality. The original inequality then follows from
$\max\{\alpha(G),\alpha^*(G)\}\geq(\alpha(G)+\alpha^*(G))/2\geq\tau(G)$.
\end{proof}

\section{Concluding remarks}

The proof uses clique trees to reduce the induction step to complete graph attachments with rooted $2$-tree edge decorations. Comparing orientation and spanning tree counts shows that the attachments arising in this decomposition
preserve the multiplicative Merino--Welsh inequality. Decorated $K_4$ attachments along an edge and along a triangle are treated separately. Induction on the number of maximal cliques of order at least four proves the $2$-connected case, and block decomposition extends the result to connected bridgeless chordal graphs.

When two graphs intersect in a complete subgraph, acyclic orientations agreeing on its edges combine to form an acyclic orientation of their union. For complete graph attachments, the spanning tree comparison gives upper bounds
on the number of spanning trees using explicit counts for unions of two complete graphs. A possible extension is to obtain analogous orientation and spanning tree bounds for unions of graphs whose common vertex set need not be a clique.

\section*{Declarations}
\noindent\textbf{Generative AI and AI-assisted technologies.} During the preparation of this work, the authors used ChatGPT (OpenAI) to assist in
exploring potential approaches to the proof of Theorem~\ref{thm:main}. All AI-generated suggestions were verified and refined by the authors, who take
full responsibility for the correctness and originality of the paper.
\smallskip

\noindent\textbf{Funding.} The work was supported by the National Natural Science Foundation of China (Grant No.\ 12271251) and the China Scholarship
Council (CSC No.\ 202606830081).
\smallskip

\noindent\textbf{Conflict of interest.} The authors declare that they have no conflict of interest.
\smallskip

\noindent\textbf{Data and code availability.} No datasets were generated or analyzed during this study.

\end{document}